\DeclareMathAlphabet\mathrsfso      {U}{rsfso}{m}{n}	
\newtheorem{remark}{Remark}
\definecolor{blue}{RGB}{0, 0, 255}
\newcommand{\T}{\bar{T}} 
\newcommand{\K}{\mathbf{k}} 
\newcommand{\PB}{P^{\text{B}}} 
\newcommand{\PC}{P^{\text{C}}} 
\newcommand{\aB}{\alpha^{\text{B}}} 
\newcommand{\aC}{\alpha^{\text{C}}} 
\newcommand{\gB}{g^B} 
\newcommand{\gC}{g^C} 
\newcommand{\V}{\hat{V}} 
\newcommand{\CVA}{\text{CVA}} 
\newcommand{\RE}{\text{RE}} 
\newcommand{\Eq}{\text{E}} 
\newcommand{\EAD}{\text{EAD}} 
\newcommand{\PFE}{\text{PFE}} 
\newcommand{\RC}{\text{RC}} 
\newcommand{\AddOn}{\text{AddOn}} 
\newcommand{\MF}{\text{MF}} 
\newcommand{\SF}{\text{SF}} 
\newcommand{\RWA}{\text{RWA}} 
\newcommand{\REPO}{\text{Repo}} 
\newcommand{\A}{\mathcal{A}} 
\newcommand{\F}{\mathrsfso{F}} 
\newcommand{\R}{\mathbb{R}} 
\newcommand{\prob}{\mathbb{P}} 
\newcommand{\Q}{\mathbb{Q}} 
\newcommand{\N}{\mathbb{N}} 
\newcommand{\E}{\mathbb{E}} 
\newcommand{\Leb}{\mathbb{L}} 
\newcommand{\ip}[2]{\langle #1, #2\rangle}
\DeclareMathOperator*{\esssup}{ess\,sup}
\newcommand{\Smax}{\bar{S}} 
\newcommand{\Vnum}{\hat{v}} 
\newcommand{\C}{\mathcal{C}}
\newcommand{\Hv}{\mathcal{H}} 
\newcommand{\D}{\mathcal{D}}
\newcommand{\Kv}{\mathcal{K}} 
\newtheorem{prop}{Proposition}
\newtheorem{cor}{Corollary}
\newtheorem{teo}{Theorem}
\newtheorem{definition}{Definition}
\journal{Applied Mathematics and Computation}
\newcommand{\firstReviewer}[1]{{\color{black}{#1}}}
\newcommand{\secondReviewer}[1]{{\color{black}{#1}}}
\begin{document}
\begin{frontmatter}
\title{Mathematical models and numerical methods for \\ a 
 capital valuation adjustment (KVA) problem}
\author[auth1]{D. Trevisani}\ead{davide.trevisani@udc.es}
\address[auth1]{Department of Mathematics, Faculty of Informatics and CITIC, Campus Elvi\~na s/n, 15071-A Coru\~na (Spain)}

\author[auth1]{J. G. L\'opez-Salas}\ead{jose.lsalas@udc.es}

\author[auth1]{C. V\'azquez}\ead{carlosv@udc.es}
 
\author[auth1]{J. A. Garc\'ia-Rodr\'iguez}\ead{jose.garcia.rodriguez@udc.es}

\begin{abstract}
In this work we rigorously establish mathematical models to obtain the capital valuation adjustment (KVA) as part of the total valuation adjustments (XVAs). For this purpose, we use a semi-replication strategy based on market theory. We formulate single-factor models in terms of expectations and PDEs. For PDEs formulation, we rigorously obtain the existence and uniqueness of the solution, as well as some regularity and qualitative properties of the solution. Moreover, appropriate numerical methods are proposed for solving the corresponding PDEs. Finally, some examples show the numerical results for call and put European options and the corresponding XVA that includes the KVA.
\end{abstract}

\begin{keyword}
 Option pricing, XVA, Capital valuation adjustment (KVA), PDE models, Numerical methods   
\end{keyword}
\end{frontmatter}

\section{Introduction}
Capital requirements, also known as regulatory capital, capital adequacy, or capital base is the amount of capital a bank or other financial institutions are required to hold by its financial regulators. 
It is usually defined as a percentage of risk-weighted-assets, and thus depends on the risks attached to the portfolio of a financial institution; the higher the risks the higher the measure (e.g. formulas contained in \cite{crebasel,marbasel}). 
Throughout history, banks have autonomously set aside capital to be used in periods of crisis.  However, with the introduction of Basel I in 1988, a group of nations officially organized themselves and established regulations. Subsequently, the world economic crisis in 2008 underscored the inadequacy of Basel II in force at that time, and this ultimately resulted in the introduction of Basel III in 2011 (see \cite[Section 12.1]{green2015xva} for a historical evolution of Basel framework).
However, while making banks more resilient, these measures drew their attention due to the increase in operating costs caused by the capital requirements themselves. Shareholders always ask for higher returns because of the higher risk they bear. While insurance regulations exist for risk margins (the counterpart of KVA in Solvency II \cite{solvency2004risk}), this problem is not treated in Basel III.
Moreover, contrary to what happens with debt holders, where the return they receive depends solely on market conditions and can easily be extracted from bond quotes, the return expected by the shareholders is unknown. Further idiosyncrasies might also occur because each institution might opt for different methodologies to calculate the requirements, and ultimately, capital management depends on the single institution. As a consequence, currently, the financial industry might agree on existence, rather than on a definition of KVA. 

Even though all research about KVA agrees in considering a new cost yielding at a certain hurdle rate, for the aforementioned reasons, actually we cannot identify a unique stream of research around this topic.
For example, the approach initiated in \cite{albanese2016capital} is inspired by regulations used in insurance (see \cite[Solvency II]{solvency2004risk}).  In \cite{albanese2016capital} the authors define the KVA in terms of a forward-backward stochastic differential equation (FBSDE), which is obtained by selecting an optimal economic capital policy.
Moreover, only market-risk capital is considered and there is also the idea to treat the KVA as a retained earning. This idea was also proposed in \cite{garcia2019retained}, where a PDE model is obtained by extending the semi-replication arguments of \cite{burgard2013funding} (see also \cite{burgard2011balance,burgard2011partial,piterbarg2010funding}). A similar semi-replication argument can then be found in \cite{green2014kva}, where the KVA is no more a retained earning and so the entire cost of capital is charged to the client.

The present work mainly follows the approach in \cite{green2014kva}, although we also present a one-dimensional version of the PDE model of \cite{garcia2019retained}. Our aims in this work are multiple. In the first place, we want to frame semi-replication for KVA (and XVAs in general) under a solid and classical market theory. As highlighted in \cite{brigo2012illustrating}, the approach proposed in \cite{piterbarg2010funding} (and by the extension also semi-replication) contains mathematical imprecisions which can be overcome with the theory of markets with dividends (see \cite{duffie}, for example). Accordingly, we apply this theory to construct the KVA model. We will show that under this approach some remarks about the lack of arbitrage can be made (see \ref{sec:lackofarb}). We therefore devote ourselves to studying a single-factor model with a simplified definition of regulatory capital. In particular, we define capital requirements through SACCR and the basic approach for CVA capital. We also neglect completely market risk capital (thus FRTB-capital), under the assumption of a perfectly hedged portfolio.
Once the PDE model is thoroughly stated, we conduct a mathematical analysis for this model by proving well-posedness in a mild sense of the PDE, and then we deduce some regularity results. We finally propose suitable numerical methods to solve the pricing problem for European vanilla options. Namely, we mainly discuss an application of the IMEX-LDG scheme proposed in \cite{wang2015stability,wang2016stability}. Numerical solutions of a system of FBSDEs are also considered using the method in \cite{gltc2016}. We finally show and discuss the obtained results.

The article is organized as follows. In Section \ref{sec:matheModelling} we establish the mathematical model by semi-replication arguments. Section \ref{sec:PDE and BSDE} is devoted to the mathematical analysis of the PDE model to establish the existence and uniqueness of the solution, as well as its regularity. In Section \ref{sec:numMethods} we describe the proposed numerical method and Section \ref{sec:numResults} shows some numerical examples of call and put options pricing. Finally, several Appendices complete the article.

\section{Mathematical modelling} \label{sec:matheModelling}
In this part, we present a mathematical framework for European options pricing with XVA in a timeline $[0,T]$. By an economy we mean a triple made of:
\begin{itemize}
    \item [i)] A probability space $(\Omega,\prob, \mathcal{F}, \{\F\}_{t\in [0,T]})$ with usual assumptions on the filtration, with $\prob$ being the real-world measure.
    \item [ii)] A couple $(Y, D)$ of stochastic adapted processes, where $Y$ denotes the prices of the assets in the economy, and $D$ is the so-called cumulative dividend process. Specifically, $D$ represents additional cashflows caused by holding the assets, so that it can track gains and costs that are not caused directly by trading.
    \item [iii)] A set of admissible trading strategies.
\end{itemize}
In the economy that we consider, the process $Y$ is driven by three sources of risk:
\begin{itemize}
    \item A one-dimensional Brownian motion $W$ driving the underlying stock $S$.
    \item A single-jump process $J^B$ that jumps when the firm $B$ defaults.
    \item A single-jumps process $J^C$ that jumps when the counterparty $C$ defaults.
\end{itemize} 
All other market factors, such as interest rates, intensities of default, and the capital hurdle rate are here assumed to be deterministic. However, the XVA measures here presented can be defined as expectations under an equivalent martingale measure as in \cite{green2015xva}. Under the standard assumption that an equivalent martingale measure for the economy exists  (see \cite{brigo2001interest}), a multi-dimensional PDE model can be then deduced through the Feynman-Kac representation formula.

Analogously to \cite{arregui2017pde} or \cite{simonella2023xva}, for example, we deploy a semi-replication argument that shows that the XVA-adjusted option price is the solution to a linear or a semilinear PDE, depending on the choice of Mark-to-Market (MTM) price of the derivative at default. Concerning the notation used here and in the following, we address the reader to the tables in \ref{sec:notation}.
\subsection{Asset-dividend dynamics} 
\label{subsec:asset-dividend}
In our economy, the price of the assets is a multi-dimensional stochastic process $Y$  on $[0, T]$. More precisely, let $$Y_t=(\V_t,X_t,CA_t,(\REPO^S)_t,(\REPO^C)_t,\PB_{t,\T},B_{t,\T}), \quad t\in [0, T],$$ where the components of $Y_t$ are defined as follows:
\begin{equation}
\label{eq:assets}
\begin{aligned}
& \V_t & \text{XVA-adjusted derivative}\\
& X_t &\text{Collateral-account}\\
& CA_t & \text{Capital account}\\
&(\REPO^S)_t=0 & \text{Stock-\secondReviewer{Repo}}\\
& (\REPO^C)_t=0 & \text{Counterparty-bond-\secondReviewer{Repo}}\\
& d\PB_{t,\T}/\PB_{t,\T}=r_t^Bdt- (1-R^B_t)dJ^B_t & \text{Firm's own bond}\\
& dB_{t,\T}/B_{t,\T}=r_t \,dt & \text{Riskless account}.
\end{aligned}
\end{equation} 
At time $t=0$ the $\sigma$-algebra is degenerate, thereby the initial conditions $Y_0$ of the above dynamics are not random.
This section aims to characterize the unknown dynamics of $\V$ so that the economy does not contain arbitrages. 
We denote the cumulative dividend process, i.e. additional cashflows caused by the assets $Y$ as $$D=(D^{\V},D^{X}, D^{CA},D^{\REPO^S},D^{\REPO^C}D^{P\REPO^B},D^{B}).$$ The initial value of the cumulative dividend process is $D_0=0$, while its evolution is described in the following points:
\begin{itemize}
\item  $\REPO^S$ represents a repurchase agreement on a single dividend-paying-stock $S$. By modelling the stock price $S$ as a geometric Brownian motion
 $$dS_t/S_t=\mu dt+\sigma dW_t,$$
the cashflow caused by the $\REPO$ is described as
 $$dD^{\REPO^S}_t=dS_t+ (\gamma_S-q_S)S_tdt.$$
Specifically, in a short time, the stock is sold and re-bought, yielding a cashflow of $dS_t$. Furthermore, on the one hand, the holder of the stock is remunerated with a stock-dividend amount $\gamma_S S_t\, dt$, while on the other hand, a \secondReviewer{Repo} rate of $q_S$ is paid as agreed. 
\item Similarly, $\REPO^C$ represents a repurchase agreement on a single counterparty bond. So, we have
 $$dD^{\REPO_C}_t=d\PC_t-q_C\PC_t\, dt,$$
where
$$d\PC_t/\PC_t=r^C_t dt- (1-R^C_t)dJ^C_t$$ 
models the dynamics of a defaultable counterparty bond. In particular, when $J^C$ jumps, the bond defaults, and its value is reduced to the level given by its recovery rate $R^C_t$,  which is generally time-dependent. An analogous consideration holds for the Firm's bond $P^B$ as indicated in \eqref{eq:assets}.
\item $D^{B}$ and $D^{P^B}$ are constant and equal to zero. The riskless account has no additional costs, i.e., it is dividend-zero. Being positive, it can then be chosen as a numeraire (see 2.2 of \cite{brigo2001interest}).
\item The collateral account $X$ is an asset that creates a cashflow due to collateral management and margining procedures. In this respect, additional details can be found for example in \cite{pallavicini2012funding} with explicit cashflow formulas in discrete and continuous time. In this model,
$$dD^{X}_t=r^X_t X_t \, dt-dX_t,$$
that is, margins $dX_t$ and interests payments $r^X_t X_t\, dt$ are settled in continuous time. It is worth stressing that this flow is not caused by trading the collateral account, so it applies whenever a portfolio contains $X$. 
\item The capital account $CA$ represents the regulatory capital amount available at the derivative desk for funding purposes. In this part, we follow the model in \cite{green2014kva}, for which the cumulative dividend process satisfies 
$$dD^{CA}_t=\gamma^\K \K_t \, dt- d(CA)_t.$$
In particular, the amount $\gamma^\K \K \, dt$ represents the remuneration to shareholders for holding the whole regulatory capital $\K$. For simplicity, this remuneration is modeled as a continuous flux of dividends, like the stock. Furthermore, the capital account is
$$(CA)_t=\phi \K_t \qquad \phi\in [0,1],$$
i.e., it is equal to a constant ratio of the capital requirement. Other KVA pricing models, such as the one presented in \cite{garcia2016recursive}, differ from \cite{green2014kva} in the choice of $CA$ and $D^{CA}$. 
\item Finally, we associate the derivative $\V$ itself with a dividend process
\begin{equation}
\label{eq:def hedg err}
dD^{\V}=-\epsilon^H_t dJ^{B}_t \quad \text{with} \quad \epsilon^H_t:=\Delta_B \V+(1-R^B_t)(\V_t-X_t-(CA)_t).
\end{equation}
Here $\Delta_B \V$ represents the variation of price of $\V$ in case of $B$ defaults (see Subsection \ref{subsec:close-out}).
The term $\epsilon_H$ is known in \cite{burgard2013funding} as ``hedging error''. This term is related to the fact that, in practice, hedging own default is not possible. As a consequence, using risky bonds for funding purposes will always generate an imperfect hedging of a derivative. The hedging error simply quantifies this imperfection (and from here, the term ``semi-replication''). By imposing $dD^{\V}=-\epsilon^H_t dJ^{B}_t$ we are assuming that this excessive gain is paid by the firm $B$ in case of default, as it is likely to happen when bankruptcy costs are considered.
\end{itemize}
Before we start the semi-replication procedure, we define the gain process $G:=Y+D$ along with the discounted gain process described by
$$dG^B_t=d\left(\frac{Y_t}{B_t}\right)+\frac{dD_t}{B_t}.$$
We then assume a zero basis for bond-CDS and bond-\secondReviewer{Repo}, that is
\begin{equation}
\label{eq:spread-intensity}
\lambda^C_t=\frac{r^C_t-q^C_t}{1-R^C_t}, \quad
\lambda^B_t=\frac{r^B_t-r_t}{1-R^B_t},
\end{equation}
where $\lambda^C$ and $\lambda^B$ denote the jump-intensity of $J^C$ and $J^B$, respectively (i.e., the intensity of default of $C$ and $B$, respectively). We highlight that this is equivalent to assuming a zero price of default risk for both $B$ and $C$. Indeed, the assets $\REPO^C$ and $P^B$ are in the gain process by the equations 
$$d(\REPO^C)_t+ dD^{\REPO^C}_t=(r^C_t-q^C_t)\PC\,dt-(1-R^C_t)\PC dJ^C_t,$$
$$d\PB_t+ dD^{\PB}_t=r^B_t\PB\,dt-(1-R^B_t)\PB dJ^B_t.$$
Therefore, under this assumption, the same components in the discounted gain process have dynamics
$$(r^C_t-q^C_t)\frac{\PC_t}{B_t}\,dt-(1-R^C_t)\frac{\PC_t}{B_t} dJ^C_t,\qquad (r^B_t-r_t)\frac{\PB_t}{B_t}\,dt-(1-R^B_t)\frac{\PB_t}{B_t} dJ^B_t,$$
and so they describe two $\prob$-martingales.
\subsection{Pricing by semi-replication}
\label{subsec:semi-repl}
In this part, we deduce the pricing model of $\V$ for the economy defined above. In particular, we will take into account market and default risks, along with funding, collateral, and regulatory capital costs. We will set a portfolio that perfectly replicates $\V$ allowing us to deduce a PDE for pricing purposes. In order to apply our argument we need to assume the function $\V=\V(t,S, J^C, J^B)$ to be regular enough so that Itô Lemma for jump-diffusion processes can be applied.
We then define a self-financing hedging portfolio $\theta$ containing $\V$. The components in $\theta$ are given by
\begin{equation}
\label{eq:port-notation}
\theta:=(1,-1,-1, \delta, \aC, \alpha^B,0).
\end{equation}
In particular, the positions in $\V$, the collateral and capital account are $1,-1$, and $-1$ respectively. This is a requirement that makes the portfolio admissible (see \ref{sec:lackofarb}). Specifically, the risky derivative $\V$ is attached with a single Credit Support Annex (CSA), and a regulatory capital amount. As a result, there is only one collateral account and one capital requirement for every derivative $\V$. Furthermore, the $-1$ for the two accounts means that these latter are used to finance the derivative. In particular, an amount $\V_0-X_0-\phi \K_0+\alpha^BP^B$ is invested at time $0$.
As the portfolio is self-financing, by \eqref{eq:assets} the value $\Pi_t$ of the strategy is
\begin{equation}
 \Pi_t=\theta_t\cdot Y_t=\V_t-X_t-\phi \K_t+\aB_t\PB_t.
\end{equation}   
We set $\Pi_t=0$ so that the \secondReviewer{Repo} positions $\delta$ and $\aC_t$ can replicate $\V$. In particular, the relation
\begin{equation}
\label{eq:financing constr K}
\aB_t\PB_t=X_t+\varphi \K_t-\V_t,
\end{equation}
connects the funding position with the value of the collateral, the regulatory capital available to the desk, and the derivative price. Namely, as long as the right-hand side is positive the bank has a long position in the account. On the other hand, a negative right-hand side implies a negative cashflow on the funding account position because of the funding rate attached to the account. This last statement is satisfied if and only if $\V_t> X_t+\varphi \K_t$, i.e., when the value of $\V_t$ exceeds the collateral given the counterparts plus the capital for funding purposes.
In order to simplify the notation, in the following, we will suppress the subscripted time variable $t$. 
For a market having cumulative dividends, the self-financing condition is defined by considering the entire gain process $G$. Namely, $\theta$ must satisfy (see \cite[chapter 6, Sec. K]{duffie})
\begin{equation}
\label{eq:sfc}
d\Pi=\theta\cdot dG=\theta\cdot (dY+dD).
\end{equation}
For the dynamics given in Subsection \ref{subsec:asset-dividend} we have
\begin{equation}
\label{eq:gain diff}
dG=\begin{pmatrix}
d\V -\epsilon^H dJ^{B}\\
r^X X \, dt\\
\gamma^\K \K\, dt\\
dS+(\gamma_S-q_S)S\, dt \\
d\PC-q_C \PC \, dt\\
d \PB\\
0
\end{pmatrix}.
\end{equation}
We now substitute \eqref{eq:gain diff} and \eqref{eq:port-notation} into \eqref{eq:sfc}. As $d\Pi=0$, this leads to
\begin{equation}
\label{eq:fundamental differential}
\begin{split}
0=d\V-\epsilon^HdJ^{B}-r^X X\, dt -\gamma^\K \K\, dt +\delta \left( dS+(q_s-\gamma_S)S\, dt\right) + \aC (d\PC-q_C \PC \, dt)+\aB d \PB 
\end{split}.
\end{equation}
Next, we devote some comments about the term in $\K$ of the latter expression and analyze its financial meaning, which embodies the essence and cause of KVA. The presence of the $\gamma^\K \K$ instead of the cost $\gamma^\K \varphi \K$  means that the whole cost of $\K$ is charged to the client. Here and in \cite{green2014kva}, this does not depend on how much capital is available to the derivative desk. However, shareholders ask for a return on their entire investment, while at the same time regulations require blocking some capital inside the institution.  This quantity cannot be used to generate a profit, and thus it becomes a cost.
The next step is to eliminate risks inside \eqref{eq:fundamental differential}, i.e., consider a riskless portfolio.
\begin{itemize}
\item[1.] We first apply Itô's formula for jump-diffusion processes to $\V$ (see \cite[Ch. 14]{pascucci2011pde}, for example), and get
\begin{equation}
\label{eq:Itô V}
\begin{split}
d\V&=\frac{\partial \V}{\partial t}\, dt+\frac{\partial \V}{\partial S}\, dS +\frac{1}{2}\frac{\partial^2 \V}{\partial S^2}\, d\langle S \rangle +\Delta_C\V dJ^C+\Delta_B \V dJ^B\\
&=\frac{\partial \V}{\partial t}\, dt+\Delta_C\V dJ^C+\Delta_B \V dJ^B\\
&\quad +\mu S \frac{\partial \V}{\partial S}\, dt  +\frac{1}{2}\sigma^2 S^2 \frac{\partial^2 \V}{\partial S^2} \, dt +\sigma S \frac{\partial \V}{\partial S} dW.
\end{split}
\end{equation}
For $\beta\in \R$ we define the differential operator $\A^\beta$ given by
\begin{equation}
    \label{eq:BS op}
    \A^\beta :=\beta S \frac{\partial }{\partial S}  +\frac{1}{2}\sigma^2 S^2 \frac{\partial^2 }{\partial S^2}. 
\end{equation}
\item[2.]  We deduce the hedge ratios, i.e., the amount of $\delta$ and $\aC$ needed to cancel terms in $dW$ and $dJ^C$. This leads to
\begin{equation}
\label{eq:hedg ratio}
\aC=\frac{\Delta_C V}{(1-R^C)\PC},\qquad\delta =- \frac{\partial V}{\partial S}.
\end{equation}
\item[3.] As far as the risk $J^B$ is concerned, the terms containing the jump $dJ^{B}$ are
$$-\epsilon^H+ \Delta_B \V -\aB\PB(1-R^B).$$
By using \eqref{eq:financing constr K} and \eqref{eq:def hedg err}, the previous expression is reduced to
$$-\epsilon^H+ \Delta_B \V +(1-R^B)(\V-X-\varphi  \K)=-\epsilon^H+\epsilon^H=0.$$
As anticipated, the error $\epsilon^H$ in the funding account made of $\PB$ is paid back when $B$ defaults. Theoretically, a zero hedging error can be achieved by using the riskless account $B$. However, very often it is not possible to finance risk-free. In this work, we adopt a single-bond financing strategy, which is the most used for real-world applications. For a discussion on different financing policies, we address to \cite{burgard2013funding}. 
\item[4.] Since all risks are now hedged we deduce a PDE model by posing the drifts of \eqref{eq:fundamental differential} equal to zero. Indeed, by \eqref{eq:hedg ratio} and \eqref{eq:financing constr K} we obtain
\begin{align*}
  0=  \frac{\partial \V}{\partial t}+\frac{1}{2}\sigma^2 S^2 \frac{\partial^2 \V}{\partial S^2}+(q_S- \gamma_S) S\frac{\partial \V}{\partial S}
    +\frac{r^C-q^C}{1-R^C}\Delta_C\V
    -r^B(\V-X-\varphi\K)-r^X X-\gamma^\K \K,
\end{align*}
that is,
\begin{align*}
    \frac{\partial \V}{\partial t}+\A^{(q_S-\gamma_S)}\V
    =r^B\V+ \frac{q^C-r^C}{1-R^C}\Delta_C\V &+(r^X-r^B)X+(\gamma^{\secondReviewer{\K}}-\varphi r^B)\K. 
\end{align*}
\end{itemize}
By using \eqref{eq:spread-intensity} we finally obtain a general model for $\V$,
\begin{equation}
\label{eq:general PDE price K}
\begin{cases}
\frac{\partial \V}{\partial t}+ \A^{(q_S-\gamma_S)} \V=r^B\V -\lambda_C\Delta_C\V+(r^X-r^B)\,X
+ (\gamma^{\K}-\varphi\, r^B) \K,\\
V(T,S)=g(S),
\end{cases}
\end{equation}
where $g$ denotes the payoff function of the European derivative.
The total XVA $U$ is defined through $\V=U+V$, where $V$ denotes the risk-free derivative, and thus solves the Black-Scholes type PDE
\begin{equation}
\label{eq:BS eq}
\begin{cases}
\frac{\partial V}{\partial t}+ \A^{(q_S-\gamma_S)} V=rV,\\
V(T,S)=g(S).
\end{cases}
\end{equation}
Accordingly, we have that $U$ solves
\begin{equation}
\label{eq:general PDE XVA K}
\begin{cases}
\frac{\partial U}{\partial t}+ \A^{(q_S-\gamma_S)} U-r^B U=(r^B-r)V -\lambda_C\Delta_C(V+U)+(r^X-r^B)\,X+ (\gamma^{\K}-\varphi\, r^B) \K,\\
U(T,S)=0.
\end{cases}
\end{equation}
\subsection{PDEs with close-out conditions}
\label{subsec:close-out}
In PDEs \eqref{eq:general PDE price K}
the close-out spread $\Delta_C V$ indicates how the value of the contract changes when the counterparty defaults. A standard market hypothesis is to set \begin{equation}
    \Delta_C \V= g^C(M)-\V,
\end{equation}
for a function $g^C$, whose value depends on the mark-to-market (MTM) value of the derivative. In this model we set
\begin{equation}
\label{eq:close-out}
g^C(M):=X+R^C(M-X)^++(M-X)^-,
\end{equation}
where $M$ denotes the MTM value. Specifically, if $C$ defaults and $M-X$ is positive, then $B$ can pledge the collateral account $X$, and $C$ pays the remaining $(M-X)$ times the recovery ratio $R^C$. However, if $C$ defaults and $(M-X)$ is negative, then $B$ has to pay the whole MTM price of the contract.  To the best of our knowledge, in applications $M$ is assumed to be equal to the risk-free value $V$ in CVA and DVA calculations, while in some cases  $M=\V$ can also be considered (e.g. \cite{brigobookxva} about the recursive nature of FVA).  For instance, $M=\V$ is imposed for perfectly collateralized derivatives (see for example  \cite[Equation $(9)$]{pallavicini2012funding}). Therefore, we have that
$$M=V \Longrightarrow -\Delta_C \V=U+(1-R^C)(V-X)^+,$$
while 
$$ M=\V \Longrightarrow -\Delta_C\V=(1-R^C)(\V-X)^+.$$
As a result, for $M=\V$,  the PDE \eqref{eq:general PDE price K} is nonlinear and reads as
\begin{equation}
\label{eq:PDE price M=V cap K}
\begin{cases}
\frac{\partial \V}{\partial t}+ \A^{(q_S-\gamma_S)} \V=r^B\V +\underbrace{(q^C-r^C)}_{=\lambda_C(1-R^C)}(\V-X)^+ +(r^X-r^B)\,X
+ (\gamma^{\K}-\varphi\, r^B) \K,\\
V(T,S)=g(S),
\end{cases}
\end{equation}
and the PDE \eqref{eq:general PDE XVA K} for the XVA is then
\begin{equation}
\label{eq:PDE XVA M=V cap K}
\begin{cases}
\frac{\partial U}{\partial t}+ \A^{(q_S-\gamma_S)} U-r^B U=&(r^B-r)(V-X) + (q^C-r^C)(U+V-X)^+ \\
&+(r^X-r)\,X + (\gamma^{\K}-\varphi\, r^B) \K,\\
U(T,S)=0.
\end{cases}
\end{equation}
Analogously, for $M=V$ we obtain the following linear PDE for the derivative price:
\begin{equation}
\label{eq:PDE price M=V K}
\begin{cases}
\frac{\partial \V}{\partial t}+ \A^{(q_S-\gamma_S)} \V=(r^B +\lambda^C)\V-\lambda^C V +(q^C-r^C)(V-X)^+ +(r^X-r^B)\,X
+ (\gamma^{\K}-\varphi\, r^B) \K,\\
V(T,S)=g(S),
\end{cases}
\end{equation}
while the XVA satisfies
\begin{equation}
\label{eq:PDE XVA M=V K}
\begin{cases}
\frac{\partial U}{\partial t}+ \A^{(q_S-\gamma_S)} U-(r^B+\lambda^C) U=&(r^B-r)(V-X) + (q^C-r^C)(V-X)^+ \\
&+(r^X-r)\,X + (\gamma^{\K}-\varphi\, r^B) \K,\\
U(T,S)=0.
\end{cases}
\end{equation}
In particular, a solution of \eqref{eq:PDE XVA M=V K} can be expressed in terms of expectation by means of the Feynman-Kac representation formula (see \cite[Ch.9]{pascucci2011pde}, for example). Namely,
$$U=-\text{CVA} + \text{FBVA}-\text{FCVA}- \text{CRA}-\text{KVA},$$ 
which by \eqref{eq:spread-intensity} can be defined as
\begin{align*}
&\text{CVA}_t(S)=\E\left[ \int_t^T  \lambda^C_u(1-R^C_u) \, e^{-\int_t^u (r^B_s+\lambda^C_s)\, ds}  (V_u(S_u)-X_u(S_u))^+ \, du\Big{|} S_t=S\right],\\
&\text{FBVA}_t(S)=-\E\left[ \int_t^T  \lambda^B_u(1-R^B_u)\, e^{-\int_t^u (r^B_s+\lambda^C_s)\, ds}  (V_u(S_u)-X_u(S_u))^- \, du \Big{|} S_t=S\right],\\
&\text{FCVA}_t(S)=\E\left[ \int_t^T  \lambda^B_u(1-R^B_u)\, e^{-\int_t^u (r^B_s+\lambda^C_s)\, ds}  (V_u(S_u)-X_u(S_u))^+ \, du\Big{|} S_t=S\right],\\
&\text{CRA}_t(S)=\E\left[ \int_t^T  (r^X_u-r_u) \, e^{-\int_t^u (r^B_s+\lambda^C_s)\, ds} X_u(S_u)\, du \Big{|} S_t=S\right],\\
&\text{KVA}_t(S)=\E\left[ \int_t^T  (\gamma^\K_u-\varphi_u r^B_u) \, e^{-\int_t^u (r^B_s+\lambda^C_s)\, ds} \K_u(S_u, V_u)\, du \Big{|} S_t=S\right].
\end{align*}
With respect to the acronyms just defined, the CVA is Credit-Value-Adjustment and quantifies the loss under the possibility the counterparts might default. FBVA and FCVA are adjustments for Funding Benefits and Costs, respectively. CRA is the Collateral Rate Adjustment and reflects the expected excess of net interests paid on collateral. Finally, the KVA stands for Capital Value Adjustments and represents the expected cost of remunerating shareholders because of the cost of capital.
Analogous formulas for the case $M=\V$ can be obtained with the solution $\V$ and $U$ appearing inside the expectations.

We finally highlight that the collateral $X$ and $\K$  have not been defined yet. Concerning $X$, we set a partial collateralization
$$X=\gamma_X M,\qquad \gamma_X\in [0,1].$$ 
As far as $\K$ is concerned, there is no unique definition. Its definition depends on the type of product considered and the particular regulatory method that the institution is allowed to use. Under some restrictions, in \ref{sec:def of capital} we will show that the capital requirement generated by a European vanilla option is given by a function $\K=\K(t,S,M)$ which is globally Lipschitz in all variables.
\subsection{A different approach to the cost of capital}
\label{subsec:kva garcia}
As stated in the introduction, the cost of capital is controversial and so there is no general consensus in industry about its management. In this part, we briefly present a one-dimensional version of the PDE model for KVA of \cite{garcia2016recursive}. This model differs from the one of \cite{green2014kva} in the part concerning the capital account $CA$, and so in the cost of capital itself. Specifically, the capital account of this economy is given by
$$CA_t=\K-(\V-V^f) \quad \text{and}\quad  dD^{CA}_t=\gamma^\K (CA_t)\, dt -d(CA)_t,$$
where $V^f$ solves the PDE  
\begin{equation}
\begin{cases}
\frac{\partial V^f}{\partial t}+ \A^{(q_S-\gamma_S)} V^f=(r^B+\lambda_C) V^f -\lambda_C g^C+(r^X-r^B)\,X, \\
V^f(T,S)=g(S).
\end{cases}
\end{equation}
Namely, $V^f$ denotes the price of the derivative including counterparty credit risk and funding costs and benefits. The main idea behind this kind of capital account is that the KVA $=\V-V^f$ can be accounted as retained earnings. Since retained earnings are considered as Core Equity Tier 1 Capital (CET1), i.e., part of $\K$, the amount the bank needs to get from shareholders might be reduced. We address the reader directly to \cite{garcia2016recursive} and \cite{garcia2019retained} for more details. It is worth stressing that under the approach in \cite{garcia2016recursive} shareholders are remunerated only for the amount $CA_t=\K_t-(\V-V^f)$, whereas in the previous model of \cite{green2014kva} the whole capital (including retained earnings) is remunerated.\\
By keeping the rest of the assets as in Subsection \ref{subsec:asset-dividend}, the same passages of Subsection \ref{subsec:semi-repl} lead to the following PDE
\begin{equation}
\begin{cases}
\frac{\partial \V}{\partial t}+ \A^{(q_S-\gamma_S)} \V=\gamma^\K \V +\lambda_C(\V-g^C)+(r^X-r^B)\,X \\
\qquad \qquad \qquad \qquad  
+(\gamma^\K-r^B)(\K-V^f),\\
\V_T(S)=g(S).
\end{cases}
\end{equation}
For the choice of the MTM $M=V$, then $U=\V-V^f=\text{KVA}$ solves
\begin{equation}
\label{eq:GAR KVA}
\begin{cases}
\frac{\partial U}{\partial t}+ \A^{(q_S-\gamma_S)}U=(\gamma^\K +\lambda^C)U+ (\gamma^\K-r^B)\K,\\
U_T(S)=0.
\end{cases}
\end{equation}
The counterpart of Subsection \ref{subsec:close-out} of this PDE is
 \begin{equation}
\begin{cases}
\frac{\partial U'}{\partial t}+ \A^{(q_S-\gamma_S)} U'=(r^B+\lambda^C) U'+ (\gamma^{\K}-\varphi\, r^B) \K,\\
U'_T(S)=0.
\end{cases}
\end{equation}
In particular, when $\varphi=1$ 
we then have
$$U(t,S)= \exp\left( -\int_t^T (\gamma^K_s-r^B_s) \, ds\right) U'(t,S).$$
Namely, as long as the spread  $\gamma^K-r^B$ is positive then the KVA of Subsection \ref{subsec:close-out} is greater than the one in \eqref{eq:GAR KVA}.

\section{Mathematical analysis of the PDE model} 
\label{sec:PDE and BSDE}
In this section we address the mathematical analysis of Cauchy problems contained in Subsection \ref{subsec:close-out}. 
With regards to the alternative KVA model in Subsection \ref{subsec:kva garcia}, we have already observed that for deterministic funding rate, its solution is easily inferred from a particular case of \eqref{eq:PDE XVA M=V K}.
\\
We notice that all PDEs presented in this work are parabolic, either linear or semilinear (see \cite[Ch. 2]{renardy2006introduction}, for example). On this topic, the reader might be interested in the theory of abstract parabolic problems,  for which \cite{henry2006geometric} presents the theory of sectorial operators and in its Example 3.6 considers semilinear diffusion problems on a bounded domain $\Omega$ and initial condition in $\Leb^2(\Omega)$. Alternatively, a more analytical approach can be found in Friedman's book \cite[Ch.7]{friedman2008partial}, where classical well-posedness of the Cauchy problem is treated using the fundamental solution methodology. However, in \cite{friedman2008partial} only bounded domains are considered. More recently, the book \cite{lunardi2012analytic} contains examples of semilinear reaction-diffusion systems in the domain $[0,T]\times \R^n$ with data in $\Leb^{\infty}(\R^n)$.

In the present work, we are dealing with PDEs of the form 
\begin{equation}
\label{eq:all PDEs}
\frac{\partial \V}{\partial t}+ \A^{\beta} \V=F(t,S,\V), \qquad (t,S)\in [0,T)\times (0,\infty),
\end{equation}
where $\beta=(q_S-\gamma_S)$ and $\mathcal{A}^{\beta}$ denotes the Black-Scholes operator \eqref{eq:BS op}, which is parabolic, but the spatial domain is unbounded. Moreover, the final condition is neither bounded nor integrable for the call option. When the MTM is equal to the risk-free derivative price $V$, the expression of the term $F$ is
\begin{equation}
\label{eq:driver linear PDE}
\begin{split}
F(t,S,\V)= (r^B+\lambda^C)\V-\lambda^C V +(q^C-r^C)(V-X)^+ &+(r^X-r^B)X \\
&+ (\gamma^{\K}-\varphi\, r^B) \K(t,S,V),
\end{split}
\end{equation}
and thus $F$ is linear in $\V$ (we recall that $X=X(V)$ and $\K=\K(t,S,V)$), so that the PDE is linear.
On the other hand, when $M=\V$ we have
\begin{equation}
\label{eq:driver nonlin PDE}
\begin{split}
F(t,S,\V)=r^B\V +(q^C-r^C)(\V-X)^+ +(r^X-r^B)\,X + (\gamma^{\K}-\varphi\, r^B) \K(t,S,\V),
\end{split}
\end{equation}
 so that $F$ becomes nonlinear in $\V$ and a semilinear PDE arises. 
 
 Note that by rewriting \eqref{eq:all PDEs} in log-coordinates we obtain a semilinear reaction-diffusion-equation (see \cite[Ch.2]{galaktionov2012stability}) very similar to the ones studied in \cite[Sec. 7.3.1]{lunardi2012analytic}. Since the term $F$ is Lipschitz in $\V$ we might expect to prove mild well-posedness with fixed-point theorems. Therefore, the main difficulty of equation \eqref{eq:all PDEs} lies in the term $F$, whose nonlinearity has linear growth, in contrast with some combustion models presenting a superlinear term 
(e.g.  Frank-Kamenetskii equation).

Alternatively, the aforementioned Cauchy problem can be solved in a viscous sense by considering the equivalent system of Forward-Backward SDEs
\begin{equation}
\label{eq:FBSDE}
\begin{cases}
dS_t=(q_S-\gamma_S)S_t\, dt + \sigma S dW_t, \\
-d\V_t=F(t,S_t,\V_t) \, dt- Z_t\, dW_t, \qquad t\in [0,T],\\
S_0=S ,\, \V_T=g(S_T).
\end{cases}
\end{equation}
In the context of BSDEs, we note that the driver $F$ is Lipschtitz in $\V$ uniformly in $(t,S)$. Moreover, the process $S$ is a geometric Brownian motion and so classical conditions for well-posedness of \eqref{eq:FBSDE} are satisfied (see \cite{pardoux1990adapted}, for example). Finally, given that the final condition $g$ is continuous and has polynomial (linear) growth,  a well-known result of \cite{pardoux2005backward} shows that the solution $\V$ of \eqref{eq:FBSDE} is a viscosity solution of \eqref{eq:all PDEs}.
\subsection{Well-posedness of the PDEs formulation}
\label{subsec:well-posedness PDE}
In this section, we aim to prove the well-posedness of \eqref{eq:all PDEs} and the regularity of its solution when nonlinearities in \eqref{eq:all PDEs} are Lipschitz and the terminal condition has at most linear growth. The proposed techniques are mainly addressed to the semilinear case included in \eqref{eq:all PDEs}. Although the linear case can be understood as a particular case of the semilinear one, the mathematical analysis can be straightforwardly performed in the frame of linear PDEs. Concerning the semilinear case, the well-posedness techniques can be framed into the semigroup theory provided by \cite{henry2006geometric} and \cite{lunardi2012analytic}, for example. Next, for the regularity of the solution, we deploy the Gaussian estimates associated with fundamental solutions as in \cite[Ch. 20]{pascucci2024pde}.

Note that similar results can be found in \cite{arregui2019mathematical} for the one-factor case and in \cite{arregui2018mathematical} for two factors in XVA models without considering KVA, in which well-posedness is deduced from the sectorial property of the Laplace operator (see \cite{henry2006geometric}). Our approach is slightly different, and it takes advantage of the explicit expression of the semigroup associated with the PDE. Thus, by direct computation on the semigroup, we also obtain some additional regularity results for the solution. 

We start by noting that the solution of the classical Black-Scholes PDE 
\begin{equation}
\label{eq:BS pde zero interest rate}
\begin{cases}
\frac{\partial \V}{\partial t}+ \A^{\beta} \V=rV, \qquad \text{in } Q_T:=(0,T)\times \R_{\geq 0}\\
\V(T,S)=(S-K)^+,
\end{cases}
\end{equation}
satisfies the relation
\begin{equation}
\label{eq:def norm}
\|\V\|_{\mathcal{X}}:=\esssup_{(t,S)\in Q_T}\Big{|}\frac{\V(t,S)}{1+S}\Big{|}<\infty,
\end{equation}
so that the risk-free value of a European call option grows linearly in $S$.
Accordingly, in order to set up a Picard iteration with the semigroup of \eqref{eq:all PDEs} we consider the Banach space
\begin{equation}
    \mathcal{X}:=\left\{ x:[0,T]\times \R^{+}\longrightarrow \R: \frac{x}{1+S}\in \Leb^\infty([0,T]\times \R^{+}) 
    \right\}
\end{equation}
endowed with $\|\cdot\|_\mathcal{X}$ norm defined in \eqref{eq:def norm}.

Next, by introducing $\Gamma(t,S,u,z)$ to denote the fundamental solution of the differential operator $\partial_t+ \A^{\beta}$,  we can consider the mild equation
\begin{equation}
    \label{eq:mild equation}
    \V(t,S)=\int_{0}^\infty \Gamma(t,S,T,z)g(z)dz- \int_t^T \int_0^\infty \Gamma(t,S,u,z) F(u,z, \V(u,z)) \, dz \, du,
\end{equation}
or equivalently, 
$$ \V(t,S)=\E[g(S_T)|S_t=S]- \int_t^T \E\left[ F(u,S_u, \V(u,S_u))|\, S_t=S\right] \, du.$$
Given the pure probabilistic form of this last expression, this kind of problem has been recently tackled through a multi-level Picard iteration technique, and convergence was also established in the multidimensional case  (see \cite{simonella2023xva} and references therein). 

Here we use this same approach to show the existence and uniqueness of the solution of \eqref{eq:mild equation} in $(\mathcal{X},\|\cdot\|_\mathcal{X})$. For simplicity, we assume $\beta$ and $\sigma$ to be constant and work in one dimension. 
Thus, we have
\begin{equation}
    \Gamma(t,S,u,z)=\frac{z^{-1}}{\sqrt{2\pi \sigma^2 (u-t)}}\exp\left( -\frac{\left[\ln{\frac{z}{S}}-(\beta-\frac{\sigma^2}{2})(u-t)\right]^2}{2\sigma^2 (u-t)} \right)\mathds{1}_{z> 0}.
\end{equation}
In the following proof, we use $c$ to denote a general positive constant, whose dependencies will be specified when it is required. We also adopt the notation 
$$m_{t,u}:= \left(\beta-\frac{\sigma^2}{2}\right)(u-t),\qquad \sigma^2_{t,u}=\sigma^2(u-t),\qquad 0\leq t< u\leq T.$$
\begin{teo}
\label{teo:existence mild}
Let $F:[0,T]\times \R_{\geq 0}\times \R \longrightarrow \R$, and $g:\R_{\geq 0}\rightarrow \R$ be Lebesgue measurable and with linear growth.
Suppose that there exists $L>0$ such that
\begin{equation}
\label{eq:lipsch condition}
    |F(t,S,f)-F(t,S',g)|\leq L(|S-S'|+ |f-g|) \qquad f,g\in \R,\, (t,S)\in [0,T]\times \R_{\geq 0}.
\end{equation} 
Then $J:\mathcal{X}\rightarrow \mathcal{X}$
\begin{equation}
    \label{eq:def J}
    J(x):= \int_{0}^\infty \Gamma(t,S,T,z)g(z)dz- \int_t^T \int_0^\infty \Gamma(t,S,u,z) F(u,z, x(u,z)) \, dz \, du
\end{equation}
is well-defined, and there exists $k\in \N$ such that the composition of $J$ k-times, $J^k$, satisfies
\begin{equation}
\label{eq:local contr}
 \|J^k(x)-J^k(y)\|_\mathcal{X}< \|x-y\|_\mathcal{X}.
 \end{equation}
Accordingly, \eqref{eq:mild equation} admits a unique solution in $\mathcal{X}$.
\end{teo}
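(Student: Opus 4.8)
The plan is to set up a Picard iteration in the weighted Banach space $(\mathcal{X},\|\cdot\|_\mathcal{X})$ and show that some iterate $J^k$ of the operator $J$ is a contraction; the Banach fixed-point theorem then yields existence and uniqueness of the solution to the mild equation \eqref{eq:mild equation}. The argument splits into two parts: first, that $J$ maps $\mathcal{X}$ into itself (well-posedness of $J$), and second, the contraction estimate \eqref{eq:local contr} for $J^k$.

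For the \emph{well-definedness} of $J$, I would estimate the two terms in \eqref{eq:def J} separately, weighted by $1/(1+S)$. For the first term, $g$ has linear growth, so $|g(z)|\leq c(1+z)$, and since $\int_0^\infty \Gamma(t,S,T,z)(1+z)\,dz = 1 + \E[S_T\mid S_t=S] = 1 + S\,e^{\beta(T-t)}$ by the explicit lognormal form of $\Gamma$, the first term is bounded by $c(1+S)$, hence lies in $\mathcal{X}$ with norm controlled by a constant depending on $T,\beta$. For the second term, I would use the Lipschitz hypothesis \eqref{eq:lipsch condition} together with linear growth of $F(u,z,0)$ (which follows from Lipschitz-ness and linear growth of $F$ in its arguments, or is assumed directly): $|F(u,z,x(u,z))| \leq c(1+z) + L|x(u,z)| \leq c(1+z)(1 + \|x\|_\mathcal{X})$. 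Again $\int_0^\infty \Gamma(t,S,u,z)(1+z)\,dz = 1 + S\,e^{\beta(u-t)}$, so the inner integral is $\leq c(1+S)(1+\|x\|_\mathcal{X})$, and integrating over $u\in[t,T]$ keeps this form. Thus $J(x)\in\mathcal{X}$ with $\|J(x)\|_\mathcal{X}\leq c(1+\|x\|_\mathcal{X})$.

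For the \emph{contraction of an iterate}, the key computation is the difference $J(x)-J(y)$. The terminal-data terms cancel, and by \eqref{eq:lipsch condition},
$$|J(x)(t,S)-J(y)(t,S)| \leq L\int_t^T \int_0^\infty \Gamma(t,S,u,z)\,|x(u,z)-y(u,z)|\,dz\,du.$$
Writing $|x(u,z)-y(u,z)| \leq \|x-y\|_\mathcal{X}(1+z)$ and again using $\int_0^\infty \Gamma(t,S,u,z)(1+z)\,dz = 1 + S\,e^{\beta(u-t)} \leq (1+S)\,e^{|\beta|T}$, I get
$$\frac{|J(x)(t,S)-J(y)(t,S)|}{1+S} \leq L\,e^{|\beta|T}(T-t)\,\|x-y\|_\mathcal{X}.$$
Iterating this bound gives the familiar factorial gain: $\|J^k(x)-J^k(y)\|_\mathcal{X} \leq \frac{(L\,e^{|\beta|T}T)^k}{k!}\,\|x-y\|_\mathcal{X}$, and choosing $k$ large enough makes the prefactor $<1$, establishing \eqref{eq:local contr}. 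A standard corollary of the Banach fixed-point theorem (a map whose iterate is a contraction has a unique fixed point) then closes the proof.

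The main technical point — and where I would be careful — is keeping the weight $1+S$ consistent throughout: every bound must be of the form $c(1+S)\times(\text{something involving }\|x-y\|_\mathcal{X})$, which hinges on the identity $\int_0^\infty \Gamma(t,S,u,z)(1+z)\,dz = 1 + S\,e^{\beta(u-t)}$ and its uniform control $\leq (1+S)e^{|\beta|T}$ on $[0,T]$. This reduces the whole argument to the first-moment computation for the lognormal kernel. A secondary point is ensuring measurability of $u\mapsto \int_0^\infty \Gamma(t,S,u,z)F(u,z,x(u,z))\,dz$ and applying Fubini to exchange the $u$- and $z$-integrations when iterating; this is routine given the Gaussian decay of $\Gamma$ and the linear growth of the integrand, but should be mentioned. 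No superlinear-growth obstruction arises precisely because $F$ has only linear growth in $\V$, as remarked before the theorem.
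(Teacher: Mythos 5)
Your proposal is correct and follows essentially the same route as the paper: linear growth of $F$ from the Lipschitz condition, the first-moment identity $\int_0^\infty(1+z)\Gamma(t,S,u,z)\,dz=1+Se^{\beta(u-t)}$ to keep every bound of the form $c(1+S)$, the pointwise-in-$t$ estimate $\esssup_S|J(x)-J(y)|(t,S)/(1+S)\leq c(T-t)\|x-y\|_{\mathcal X}$, and the factorial gain by induction to make some iterate a contraction. The only cosmetic difference is that you state the iterated bound $\frac{(cT)^k}{k!}$ explicitly where the paper defers to ``standard induction arguments.''
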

\begin{proof}
Since $F$ is uniformly Lipschitz in the $(S,x)$ variables it holds
\begin{equation}
|F(t,S,x)|\leq c (1+|S|+|x|),\qquad (t,S,x)\in [0,T]\times \R_{\geq 0}\times \R,
\end{equation}
and so, for $x\in \mathcal{X}$ there exists $c=c(F,T,\|x\|_\mathcal{X})$ such that
\begin{equation}
\label{eq:lin growth in X}
|F(t,S,x(t,S))|\leq c(1+|S|),\qquad (t,S)\in [0,T]\times \R_{\geq 0}.
\end{equation}
Accordingly,
\begin{equation*}
\begin{aligned}    
  |J(x)(t,S)|&= \Bigg{|}\int_0^\infty \Gamma(t,S,T,z)g(z)\, dz+\int_t^T \int_0^\infty \Gamma(t,S,u,z)F(u,z,x(u,z))\, dz\, du\Bigg{|}\\
  &\leq c\left( \int_0^\infty (1+z)\Gamma(u,S,T,z)\, dz+ \int_t^T \int_0^\infty (1+z)\Gamma(t,S,u,z)\, dz\, du\right).
\end{aligned}
\end{equation*} 
Since $\Gamma(t,S,u,z)$ is the density function of a geometric Brownian motion at time $u\geq t$ and initial condition $S$ we get
\begin{equation}
\label{eq:proof bound norm x}
\begin{aligned}
|J(x)(t,S)|&\leq c\left( (1+ S\exp\left(\beta(T-t)\right) + \int_t^T (1+ S\exp\left(\beta(u-t)\right)) du\, \right)\\
&= c \left( 1+ S \exp{(\beta(T-t))}+(T-t)\left[ 1+ S \frac{\exp\left(\beta(T-t)\right)-1}{\beta (T-t)}\right]\right)\\
&\leq c(1+(T-t))(1+S)\leq c(1+S),
\end{aligned}
\end{equation}
and this proves the well-posedness of $J(x)$.
To prove \eqref{eq:local contr} we see that
\begin{equation}
\begin{aligned}
|J(x)-J(y)|(t,S) & \leq \int_t^T \int_0^\infty \Gamma(t,S,u,z)\big{|} F(u,z,x(u,z))-F(u,z,y(u,z))\big{|}\, dz\, du \\
&\leq c \,\int_t^T \int_0^\infty \Gamma(t,S,u,z)|x-y|(u,z)dz\, du\\
&\leq c\|x-y\|_\mathcal{X}\int_t^T \int_0^\infty \Gamma(t,S,u,z)(1+z)dz\, du.
\end{aligned}
\end{equation}
By repeating the computations in \eqref{eq:proof bound norm x} we thus obtain
\begin{equation}
\label{eq:proof quasi local contr}
\esssup_{S\geq 0} \frac{|J(x)-J(y)|(t,S)}{1+S}\leq c(T-t) \|x-y\|_{\mathcal{X}},
\end{equation}
for some $c>0$ independent of $x,y$.
The statement now follows by standard induction arguments applied to \eqref{eq:proof quasi local contr}.
\end{proof}
\begin{cor}
\label{cor:existence classical sol}
Under the assumptions of Theorem \ref{teo:existence mild}, the mild solution $\V$ of \eqref{eq:all PDEs} satisfies
\begin{align}
\label{eq:bound first der}
& \frac{S}{1+S}\Big{|}\frac{\partial \V}{\partial S}\Big{|}(t,S) \leq c, \qquad (t,S)\in [0,T)\times \R^{+},\\
\label{eq:limit second der}
&\lim_{S\to \infty} \frac{\partial^2 \V}{\partial^2 S}(t,S)=0, \qquad t\in [0,T).
\end{align}
\end{cor}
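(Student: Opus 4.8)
The strategy is to differentiate the mild equation \eqref{eq:mild equation} twice in $S$ under the integral sign and to estimate the two resulting kernels by hand. For $u>t$ the lognormal fundamental solution is smooth in $S\in(0,\infty)$, and a direct computation gives
\begin{equation*}
\partial_S\Gamma(t,S,u,z)=\frac{a}{S}\,\Gamma(t,S,u,z),\qquad
\partial^2_S\Gamma(t,S,u,z)=\frac{1}{S^2}\,\bigl(a^2-a-\sigma_{t,u}^{-2}\bigr)\,\Gamma(t,S,u,z),
\end{equation*}
where $\sigma_{t,u}:=\sigma\sqrt{u-t}$ and $a:=(\ln(z/S)-m_{t,u})\,\sigma_{t,u}^{-2}$. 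Since $\Gamma(t,S,u,\cdot)$ is the density of $S\exp(m_{t,u}+\sigma_{t,u}\xi)$ with $\xi\sim N(0,1)$, under $\Gamma(t,S,u,\cdot)\,dz$ the quantity $a$ is distributed as $\xi/\sigma_{t,u}$; hence every kernel moment reduces to a Gaussian moment, and the only genuine difficulty is the diagonal singularity $u\downarrow t$, entering through the factors $\sigma_{t,u}^{-1}$ and $\sigma_{t,u}^{-2}$. Differentiation under the integral is legitimate on compact subsets of $[0,T)\times\R^{+}$ by dominated convergence, using these Gaussian bounds together with the linear growth of $g$ and of $u\mapsto F(u,\cdot,\V(u,\cdot))$ — the latter being estimate \eqref{eq:lin growth in X} from the proof of Theorem \ref{teo:existence mild}.

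For \eqref{eq:bound first der} I would bound $|S\,\partial_S\V(t,S)|$ from the representation obtained by multiplying the differentiated mild equation by $S$. In the time integral the crude bound $|F(u,z,\V(u,z))|\le c(1+z)$ suffices: one checks $\int_0^\infty |S\,\partial_S\Gamma(t,S,u,z)|\,(1+z)\,dz=\E\bigl[\sigma_{t,u}^{-1}|\xi|\,(1+S e^{m_{t,u}+\sigma_{t,u}\xi})\bigr]\le c(1+S)\,\sigma_{t,u}^{-1}$, and $\int_t^T(u-t)^{-1/2}\,du=2\sqrt{T-t}$ is finite, so this contributes at most $c(1+S)$. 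For the terminal term one first recentres: since $\int_0^\infty \partial_S\Gamma(t,S,T,z)\,dz=\partial_S 1=0$, one may replace $g(z)$ by $g(z)-g(S)$, and then, $g$ being globally Lipschitz (as are the vanilla call and put payoffs), $|g(z)-g(S)|\le L_g|z-S|$, combined with the increment estimate $\int_0^\infty |S\,\partial_S\Gamma(t,S,T,z)|\,|z-S|\,dz= S\,\E\bigl[\sigma_{t,T}^{-1}|\xi|\,\bigl|e^{m_{t,T}+\sigma_{t,T}\xi}-1\bigr|\bigr]\le c\,S$ (the increment $|e^{m_{t,T}+\sigma_{t,T}\xi}-1|$ being of order $\sigma_{t,T}$, which absorbs the singular factor $\sigma_{t,T}^{-1}$), gives a contribution $\le c(1+S)$, uniformly for $t\in[0,T)$. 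Dividing by $1+S$ and taking $\esssup_{S}$ yields \eqref{eq:bound first der}. (If $g$ is only assumed of linear growth, the same argument works with a constant depending on $T-t$.)

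For \eqref{eq:limit second der} I would differentiate once more and exploit the extra factor $S^{-2}$ in $\partial^2_S\Gamma$. Split $\partial^2_S\V$ into a terminal term and a time integral. In the time integral, recentring by $\int_0^\infty \partial^2_S\Gamma(t,S,u,z)\,dz=0$ lets one replace $F(u,z,\V(u,z))$ by $F(u,z,\V(u,z))-F(u,S,\V(u,S))$; the Lipschitz property of $F$ and the already established bound \eqref{eq:bound first der} — which, via $|\V(u,z)-\V(u,S)|\le\int_{S}^{z}|\partial_S\V(u,w)|\,dw$, gives $|\V(u,z)-\V(u,S)|\le c\bigl(|z-S|+|\ln(z/S)|\bigr)$ — reduce matters to estimating $\int_0^\infty |\partial^2_S\Gamma(t,S,u,z)|\bigl(|z-S|+|\ln(z/S)|\bigr)\,dz$. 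Since $\E\bigl[|a^2-a-\sigma_{t,u}^{-2}|\bigr]=O((u-t)^{-1})$ while the recentred factors contribute a gain of order $\sqrt{u-t}$ under $\Gamma$, this integral is $O\bigl(S^{-1}(u-t)^{-1/2}\bigr)$; integrating over $[t,T]$, the time-integral part is $O(1/S)$. The terminal term is even simpler: the crude estimate $\int_0^\infty |\partial^2_S\Gamma(t,S,T,z)|\,(1+z)\,dz\le c\,S^{-2}\sigma_{t,T}^{-2}(1+S)$ shows it is $O(1/S)$ for each fixed $t<T$. Hence $\partial^2_S\V(t,S)\to 0$ as $S\to\infty$, which is \eqref{eq:limit second der}. (Alternatively, once \eqref{eq:bound first der} is available and $|\partial_t\V|=O(S)$ is obtained by the same kernel estimates applied to $\partial_t\Gamma$, \eqref{eq:limit second der} follows directly from the PDE in the form $\tfrac12\sigma^2 S^2\,\partial^2_S\V=F(t,S,\V)-\partial_t\V-\beta S\,\partial_S\V=O(S)$.)

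The main obstacle, common to both parts, is precisely the diagonal singularity of the differentiated kernels — of order $(u-t)^{-1/2}$ for $\partial_S\Gamma$ and $(u-t)^{-1}$ for $\partial^2_S\Gamma$. The device that defeats it is recentring the integrand by subtracting its value at $z=S$, which is permissible because $\int_0^\infty\partial_S\Gamma\,dz=\int_0^\infty\partial^2_S\Gamma\,dz=0$; this turns the singular prefactors into moments of the \emph{increments} $z-S$ and $\ln(z/S)$ of the geometric Brownian motion, which vanish like powers of $\sqrt{u-t}$ and exactly compensate the singularity, leaving an integrable power of $u-t$ and, in the second-derivative case, the additional decay $1/S$ needed for the limit.
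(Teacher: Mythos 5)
Your proposal is correct and follows essentially the same route as the paper: differentiate the mild equation against the explicit lognormal kernel, absorb the integrable $(u-t)^{-1/2}$ singularity of $\partial_S\Gamma$ directly, and defeat the $(u-t)^{-1}$ singularity of $\partial_S^2\Gamma$ by recentring with $\int_0^\infty\partial_S^2\Gamma\,dz=0$ combined with the local Lipschitz property of $H(u,z)=F(u,z,\V(u,z))$ inherited from \eqref{eq:bound first der}. The only differences are cosmetic: the paper discards the terminal term by assuming $g=0$ and splits the recentred integral into near/far regions controlled by the Gaussian estimate \eqref{eq:gaussian est}, whereas you recentre globally via Gaussian moments and treat the terminal term explicitly (correctly noting it needs $g$ Lipschitz, as for the vanilla payoffs).
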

\begin{proof}
In this proof, we make use of the following Gaussian inequality (see \cite[Lemma 20.3.4]{pascucci2024pde}). Namely, for every $\lambda_0>\lambda>0$ and $p>0$ there exists a constant $c=c(p,\lambda,\lambda_0)$ such that
\begin{equation}
\label{eq:gaussian est}
\left(\frac{|x|}{\sqrt{t}}\right)^p\frac{1}{\sqrt{2\pi \lambda t}}\exp\left(-\frac{x^2}{2\lambda t}\right)\leq  \frac{c}{\sqrt{2\pi \lambda_0 t}}\exp\left(-\frac{x^2}{2\lambda_0 t}\right), \qquad (t,x)\in (0,\infty)\times \R. 
\end{equation}
From Theorem \ref{teo:existence mild} the mild equation \eqref{eq:mild equation} admits a unique solution with linear growth $\V$.
In order to prove regularity, we differentiate \eqref{eq:mild equation} and manage with the semigroup given by $\Gamma$. Without loss of generality, we prove regularity only for the second term of the mild equation \eqref{eq:mild equation}, and thus we assume $g=0$.
In order to prove the existence and regularity of the first derivative we show that
\begin{align*}
\int_{t}^T \Bigg{|}\int_0^\infty  \frac{\partial \Gamma}{\partial S}(t,S,u,z) F(u,z,\V(u,z))\, dz\Bigg{|} du <\infty, \qquad (t,S)\in [0,T)\times \R_{> 0}.
\end{align*}
For this purpose, by \eqref{eq:lipsch condition} and $\V\in \mathcal{X}$ it is sufficient to show that for every $S>0$
\begin{equation}
    \label{eq:proof bound firs der}
\int_0^\infty  \Big{|}\frac{\partial \Gamma}{\partial S}(t,S,u,z)\Big{|} (1+z)\, dz\, du\leq \frac{c(S)}{\sqrt{u-t}}, \qquad u\in [t,T] .
\end{equation}
For $z>0$ and $S>0$ we have
\begin{equation}
    \label{eq:proof first der Gamma}
    \frac{\partial \Gamma}{\partial S}(t,S,u,z)=\frac{\Gamma(t,S,u,z)}{ S \sigma_{t,u}} \left(\frac{\ln{\frac{z}{S}}-m_{t,u}}{\sigma_{t,u}}\right).
\end{equation}
By replacing \eqref{eq:proof first der Gamma} in \eqref{eq:proof bound firs der} and using the change of variable $z=S \exp(\sigma_{t,u}y +m_{t,u})$ in \eqref{eq:proof bound firs der}, we obtain
\begin{equation}
    \begin{aligned}
        S\int_0^\infty  \Big{|}\frac{\partial \Gamma}{\partial S}(t,S,u,z)\Big{|} (1+z)\, dz & =\frac{1}{\sigma_{t,u}}\int_{-\infty}^\infty \frac{\exp(-\frac{y^2}{2})}{\sqrt{2\pi}}\, |y|(1+S\exp(\sigma_{t,u}y +m_{t,u}))\, dy\\
        &\leq \frac{c}{\sigma_{t,u}}\left( 1 + S\int_{-\infty}^\infty \frac{|y|\,\exp(-\frac{y^2}{2})}{\sqrt{2\pi}}\, \exp(\sigma_{t,u}y +m_{t,u}))\, dy\right)\\
        &\leq \frac{c}{\sigma_{t,u}}\left( 1+S\right)\leq \frac{c(1+S)}{\sqrt{u-t}}.
    \end{aligned}
\end{equation}
This last relation proves \eqref{eq:bound first der}, which in turn implies that $\V$ is locally Lipschitz in $S$ uniformly in $t$. 

Next, in order to prove the existence of the second derivative we will consider
$$H(u,z):=F(u,z,\V(u,z)).$$
As $F$ is uniformly Lipscthitz in the two last variables, by \eqref{eq:bound first der} we obtain
\begin{equation}
\label{eq:proof loc lipsc H}
|H(u,z)-H(u,y)|\leq c_n |z-y|, \qquad u\in [0,T],\, z,y \geq n^{-1},
\end{equation}
where $c_n$ grows linearly with respect to $n$.
In order to deduce the existence of the second derivative for every $S>0$, it is sufficient to prove that 
\begin{equation}
\label{eq:proof bound sec der}
|I|:=\Bigg{|}\int_0^\infty  \frac{\partial^2 \Gamma}{\partial S^2}(t,S,u,z)H(u,z)\, dz \, dz\Bigg{|}\leq \frac{c(S)}{\sqrt{u-t}}.
\end{equation}
From a direct computation of $ \displaystyle \frac{\partial^2 \Gamma}{\partial S^2}(t,S,u,z)$ and the change of variable $z=S \exp(y+m_{t,u})$, we obtain
\begin{equation}
    \begin{aligned}
        I&=\frac{1}{S^2\sigma^2_{t,u}}\int_{-\infty}^\infty \frac{\exp\left(-\frac{y^2}{2\sigma^2_{t,u}}\right)}{\sqrt{2\pi \sigma^2_{t,u}}}\left(\frac{y^2}{\sigma^2_{t,u}}-1-\frac{y}{\sigma_{t,u}}\right)H(u,S\exp(y+m_{t,u}))\, dy\\
        &=\frac{1}{S^2\sigma^2_{t,u}}\int_{-\infty}^\infty G(y,\sigma_{t,u})H(u,S\exp(y+m_{t,u}))\,dy:=I_1+I_2,
    \end{aligned}
\end{equation}
where
\begin{equation}
\begin{split}
I_1&=\frac{1}{S^2\sigma^2_{t,u}}\int_{|y|<n} G(y,\sigma_{t,u})H(u,S\exp(y+m_{t,u}))) \, dy, \\
I_2&=\frac{1}{S^2\sigma^2_{t,u}}\int_{|y|\geq n} G(y,\sigma_{t,u})H(u,S\exp(y+m_{t,u}))\, dy.
\end{split}
\end{equation}
In order to prove \eqref{eq:proof bound sec der} for the module of $I_2$, we use that $H(u,z)$ grows linearly in $z$ and $|y|>n$. In this case,
\begin{equation}
    \begin{aligned}
      |I_2|  \leq \frac{c}{S^2\sigma^2_{t,u}}\int_{|y|\geq n} |y|G(y,\sigma_{t,u})(1+S\exp(y+m_{t,u}))\, dy,
    \end{aligned}
\end{equation}
and thus the relation follows by the estimate \eqref{eq:gaussian est}.
In order to estimate $I_1$ we observe that for $|y|\leq n$, by \eqref{eq:proof loc lipsc H} and $S>0$ we can find a constant $c=c(S,n)$ such that
\begin{equation}
\label{eq:proof est diff H}
|H(u,S\exp(y+m_{t,u}))-H(u,S)|\leq c S\,|\exp(y+m_{t,u})-1|\leq c\, S |y+m_{t,u}|. 
\end{equation}
Accordingly,
\begin{equation}
    \begin{aligned}
        |I_1|\leq |J_1|+|J_2|=&\frac{1}{S^2\sigma^2_{t,u}}\Bigg{|}\int_{|y|<n} G(y,\sigma_{t,u})\left[H(u,S\exp(y+m_{t,u})))-H(u,S)\right] \, dy \Bigg{|} \\
        &+ \frac{|H(u,S)|}{S^2\sigma^2_{t,u}}\Bigg{|}\int_{|y|<n} G(y,\sigma_{t,u}) \, dy \Bigg{|}.
    \end{aligned}
\end{equation}
The estimate \eqref{eq:proof bound sec der} for $|J_1|$ follows by \eqref{eq:proof est diff H} and the Gaussian estimate \eqref{eq:gaussian est}. In particular, we obtain
$$|J_1|\leq \frac{c(S)}{S\sqrt{u-t}}$$
with $\lim_{S\to 0^+} c(S)=\infty$ and $c(S)$ bounded in $S>1$.
Finally, to bound $|J_2|$, notice that
$$\int_{-\infty}^{\infty} G(y,\sigma_{t,u})\, dy=\int_{-\infty}^{\infty} \frac{\exp(-\frac{z^2}{2})}{\sqrt{2\pi} }(z^2-1-z)\, dz=0,$$
and thus
\begin{align*}
    |J_2|&=\frac{|H(u,S)|}{S^2\sigma^2_{t,u}}\Bigg{|}\int_{|y|>n} G(y,\sigma_{t,u}) \, dy \Bigg{|}=\frac{|H(u,S)|}{S^2\sigma^2_{t,u}}\Bigg{|}\int_{|z|>\frac{n}{\sigma_{t,u}}}\frac{\exp(-\frac{z^2}{2})}{\sqrt{2\pi} }(z^2-1-z)\, dz\Bigg{|}.
\end{align*}
In order to estimate the integral, without loss of generality, we can assume $\sigma_{t,u}<\frac{1}{2}$ and so
$$\frac{1}{2}\left(z+\frac{n}{\sigma_{t,u}}\right)<z< \frac{z^2}{2},\qquad \left \{|z|>\frac{n}{\sigma_{t,u}}\right \}\subseteq \{|z|>n\}.$$
As a consequence,
\begin{align*}
   |J_2|&\leq \frac{c}{S^2\sigma^2_{t,u}} \exp\left({-\frac{n}{2\sigma_{t,u}}}\right) \int_{|z|>n} \exp\left(-\frac{z}{2}\right)|z^2-1-z|\, dz \leq \frac{c}{S^2} \frac{\exp(-\frac{n}{2\sigma_{t,u}})}{\sigma^2_{t,u}}\leq \frac{c}{S^2 \, \sigma_{t,u}},
\end{align*}
as the exponential decay of $\exp(-\frac{n}{2\sigma_{t,u}})$ dominates over  $\sigma^{-2}_{t,u}$ when $u\to t^+$. Indeed such a limit for $|J_2|$ is zero. 
We highlight that as for the other terms, the constant $c=c(S,n)$ diverges only when $S$ tends to zero. This proves the existence of the second derivative of $\V$ along with \eqref{eq:limit second der}, and so the proof is now concluded.
\end{proof}
Note that \eqref{eq:limit second der} will be used to impose the boundary conditions in the computational domain to be defined for the numerical solution.

\section{Numerical methods for \secondReviewer{the arising} PDEs} \label{sec:numMethods}
In this section we describe the proposed numerical methods to solve the (non)linear parabolic PDE problems, such as PDE \eqref{eq:all PDEs}. As in many problems in finance, in a previous step to the numerical solution, the initial spatial unbounded domain is truncated to a computational bounded one. For this purpose, we introduce a large enough fixed value of the asset price $\overline{S}$ and consider the spatial bounded domain $[0,\overline{S}]$. Moreover, we reformulate the previously posed PDE problems \eqref{eq:all PDEs} with final condition in terms of equivalent initial PDE problems. Thus, we consider the new time to maturity variable $\tau=T-t$.

Next, following the approach contained \secondReviewer{in} \cite{wang2016stability}, we combine an IMplicit-EXplicit (IMEX) method for the time discretization with a Local Discontinuous Galerkin (LDG) method for the spatial discretization. Thus, we describe the LDG scheme in \secondReviewer{S}ubsection \ref{subsubsec:LDGdiscr}, and a couple of IMEX time-marching schemes in \secondReviewer{S}ubsection \ref{subsubsec:IMEXdiscr}. LDG methods are suitable for nonlinear problems that can be written in conservative form. We address the reader to \cite{cockburn2012discontinuous} for the general theory about LDG, and \cite[Part III]{bertoluzza2009numerical} for the treatment of schemes with alternating fluxes. In finance, documented LDG methods combined with explicit Runge-Kutta schemes have been used for portfolio optimization problems and can be found in \cite{liu2007modeling} and \cite{birge2015local}. With respect to IMEX schemes used in finance, we point out the research developed in the direction of option pricing with jump-diffusion processes. For this kind of problems, the nonlocal integral term is treated in explicit form, while the diffusion and advection terms are integrated implicitly. A presentation of different schemes and stability analysis can be found in \cite{salmi2014imex1}. In two spatial dimensions, we can mention the case with stochastic volatility under the Bates model that is considered in \cite{salmi2014imex2}. Further works in dimension two are centered on operator splitting schemes in order to combine the ADI method with IMEX time integration (e.g. \cite{boen2020operator,boen2021operator,in2018adi}). 
\subsection{LDG space semidiscretization} 
\label{subsubsec:LDGdiscr}
After truncating the domain and introducing the new time variable $\tau=T-t$ the problem \eqref{eq:all PDEs} can be written in conservative form as
\begin{equation}
\label{eq:generalConserPDE}
\begin{cases}
\partial_\tau \V + \partial_S  f(S,\V) =\partial_S\left(a(S)\partial_S \V \right)+ H(\tau,S,\V), \qquad (\tau,S)\in (0,T]\times [0,\bar{S}],\\
\V(0,S)=g(S),
\end{cases}
\end{equation}
where
\begin{equation}
\begin{cases}
a(S)=\frac{1}{2}\sigma^2 S^2, \\
f(S,\V)=(\sigma^2-\beta)S \, \V,\\
H(\tau,S,\V)=(\sigma^2-\beta)\V-F(T-\tau,S,\V),
\end{cases}
\end{equation}
and $F$ is defined as in \secondReviewer{\eqref{eq:driver linear PDE}-\eqref{eq:driver nonlin PDE}. This is the formulation we will solve numerically to obtain the results that are shown in the next section.}
%
We now display the spatial discretization of \eqref{eq:generalConserPDE}
%
along with conditions on the parabolic boundary 
$$
\{0\}\times [0,\Smax] \cup (0,T]\times \{0\} \cup (0,T]\times \{\Smax\}.$$
The initial condition is given by the payoff function $g$, that is
\begin{equation}
 \V(0,S) = g(S),\quad S\in[0,\Smax]. \label{eq:pdeIniCond}
\end{equation}
On the sides of the boundary, motivated by Corollary \ref{cor:existence classical sol}, for a call option, we impose the condition
\begin{equation} 
\label{eq:callBC}
 \V(\tau,0) = 0, \quad \partial_{SS} \V(\tau,\Smax) = 0, \quad \tau\in(0,T),
\end{equation}
while for a put option we impose
\begin{equation} \label{eq:putBC}
 \partial_{SS}\V(\tau,0) = 0, \quad \V(\tau,\Smax) = 0, \, \quad \tau\in(0,T).
\end{equation}
In order to pose the appropriate formulation to apply a discontinuous Galerkin method, we introduce the new unknown 
\begin{equation} \label{new_unknown}
    Q(\tau,S) = \partial_S \V(\tau,S),
\end{equation}
and the following notation $G(S,Q)=a(S)Q$.
Thus, the second order PDE \eqref{eq:generalConserPDE} can be equivalently formulated in terms of \secondReviewer{the} following first-order system:
\begin{equation}
\label{eq:system_PDE}
\begin{cases}
 \partial_\tau\V + \partial_Sf(S,\V) = \partial_S G(S,Q) + H(\tau,S,\V), \quad (\tau,S)\in [0,\Smax]\times(0,T],\\
 Q(\tau,S) = \partial_S \V(\tau,S), 
 \end{cases}
\end{equation}
with the same initial condition \eqref{eq:pdeIniCond} and boundary conditions \eqref{eq:callBC} or \eqref{eq:putBC}.

Next, for the spatial discretization of system \eqref{eq:system_PDE} with a LDG method,
we consider the \secondReviewer{uniform} mesh
$$\mathcal{T}_h:=\{ I_j=(S_j,S_{j+1}]\secondReviewer{=(jh,(j+1)h]},\, 0\leq j<N\}$$ associated to the set of nodes
$0=S_0<S_1<\cdots < S_N=\Smax$, \secondReviewer{with constant cell width $h=\Smax/N$}. 
Associated to the previous mesh, we consider the discontinuous finite element space of piecewise polynomial function having  degree at most $k$
$$E_h := \{v \in \Leb^2([0,\Smax]): v\lvert_{I_j}=v_j\in \mathcal{P}_k(I_j), \forall j=0,\ldots,N-1\}.$$
For the internal product of $\Leb^2((0,\Smax])$ we use the notation
$$\ip{v}{w}=\sum_{0\leq j<N}\int_{I_j} v(s)w(s)\, ds=\sum_{0\leq j<N} \ip{u_j}{v_j}_j,$$
and a general element $v\in E_h$ has the form
\begin{equation*}
v=\sum_{0\leq j<N}\sum_{i=0}^k v_{j}^{i} \Phi_{j}^{i},
\end{equation*}
where for each $j$, $\{ \Phi_{j}^{i}, 0\leq i\leq k\}$ is a basis of $\mathcal{P}_k(I_j)$. 
Having the elements of a basis of $E_h$ compact support, $v\in E_h$ can present discontinuities across the edges of the cells. Therefore, there are two traces along the right-hand and left-hand of each cell, here denoted by $v^+$ and $v^-$, respectively.

For a given $\tau>0$, the semidiscrete LDG scheme aims to find the numerical solution $(\Vnum(\tau,.),q(\tau,.)) \in E_h\times E_h$
\begin{equation}
\label{eq:Vnum as linear combination}
 \Vnum(\tau,.)=\sum_{0\leq j<N}\sum_{i=0}^k \Vnum_{j}^{i}(\tau) \Phi_{j}^{i}, \qquad  q(\tau,.)=\sum_{0\leq j<N}\sum_{i=0}^k q_{j}^{i}(\tau) \Phi_{j}^{i},
\end{equation}
such that
\begin{equation}
\label{eq:varForm}
\begin{aligned}
 &\ip{\partial_\tau \Vnum}{v}_j = \D_j(q,v)+ \C_j(\Vnum,v) + \Hv_j(\Vnum,v),  \\
 & \ip{q}{w}_j = \Kv_j(\Vnum,w), 
 \end{aligned}
\end{equation}
for each cell $I_j$
and every $(v,w)\in E_h\times E_h$. Unless strictly required, to the aim of simplicity  in the following
we omit the use of the variables $\tau$ and $S$. In \eqref{eq:varForm}, we use the following notation:
\begin{equation}
\label{eq:CHDK}
\begin{aligned}
 \C_j(\Vnum,v) &= \ip{f(\Vnum)}{\partial_S v}_j - \tilde{f}_{j+1}\, v(S_{j+1}^-) + \tilde{f}_{j}\, v(S_{j}^+), \\
 \Hv_j(\Vnum,v) &= \ip{H(\Vnum)}{v}_j, \\
 \D_j(q,v) &= -\ip{G(q)}{\partial_S v}_j + G(\tilde{q}_{j+1})\, v(S_{j+1}^-) - G(\tilde{q}_{j})\, v(S_{j}^+),\\
   \Kv_j(u,w) &= -\ip{u}{\partial_S w}_j + \tilde{u}_{j+1} w(S_{j+1}^-) - \tilde{u}_{j} w(S_{j}^+),
\end{aligned}
\end{equation}
where $\tilde{f},\tilde{u}$ and $\tilde{q}$ are numerical fluxes associated to $f,u$ and $q$ respectively. Although $\tilde{f}$ can be any monotone numerical flux, in this work we choose the simple Lax-Friedrich flux
$$\tilde{f}_j = \tilde{f}(\Vnum(S_j^-),\Vnum(S_j^+))=\frac12 \bigg( f(\Vnum(S_j^-)) + f(\Vnum(S_j^+)) - \alpha_j \big( \Vnum(S_j^+) - \Vnum(S_j^-) \big) \bigg), \quad \mbox{with} \; \alpha_j = \max_{\Vnum \in I_j}\lvert \partial_{\Vnum} f(\Vnum) \rvert.$$
Secondly, for $\tilde{u}$ and $\tilde{q}$ an alternating numerical flux has to be considered. The crucial point with the alternating is that $\tilde{u}$ and $\tilde{q}$ have to be chosen from different directions:
\begin{enumerate}[label=A\arabic*]
 \item Selecting $\tilde{u}$ from the left and $\tilde{q}$ from the right, i.e. $\tilde{u}_j=u_j^-$ for $j=1,\ldots,N$ and $\tilde{q}_j=q_j^+$ for $j=0,\ldots,N-1$. This selection is well-suited for imposing the boundary conditions for call options written in \eqref{eq:callBC}. On the one hand, we force homogeneous Dirichlet boundary conditions at $S=0$, i.e. $\tilde{u}_0=0$. On the other hand, by enforcing a constant behavior of $q$ in the neighborhood of $\Smax$
 i.e $\tilde{q}_N=q_N(\Smax-\varepsilon)$,  $\varepsilon>0$, 
we impose $\partial_{SS}\V(\Smax)=0$.
\item Taking $\tilde{u}$ from the right and $\tilde{q}$ from the left, i.e. $\tilde{u}_j=u_j^+$ for $j=0,\ldots,N-1$ and $\tilde{q}_j=q_j^-$ for $j=1,\ldots,N$. This selection is suitable in order to impose the boundary conditions for put options reported in \eqref{eq:putBC}. Firstly, we force homogeneous the Dirichlet boundary condition at $S=\Smax$, i.e. $\tilde{u}_N=0$. Then, $\tilde{q}_0=q_0(\varepsilon)$, $\varepsilon>0$, allows to impose $\partial_{SS}\V(0)=0$.
\end{enumerate}

\secondReviewer{The number of equations in the variational formulations \eqref{eq:varForm} is equal to $2N(k+1)$.} By summing up these variational formulations over all the cells we get the following semidiscrete LDG in global form
\begin{equation}
\label{eq:global variational form}
\begin{aligned}
 &\ip{\partial_\tau \Vnum}{v} = \D(q,v)+ \C(\Vnum,v) + \Hv(\Vnum,v) ,  \\
 &\ip{q}{w} = \Kv(\Vnum,w), 
\end{aligned}
\end{equation}
where 
$$C(\Vnum,v) = \sum_j \C_j(\Vnum,v),$$ and similarly for $\Hv$, $\D$ and $\Kv$.
In this work, we consider the orthogonal nodal basis defined by the Lagrange interpolation polynomial basis over the $k+1$ Gauss-Legendre quadrature nodes in the interval $I_j$. Specifically, for every $i,j$ we consider the canonical $i^{th}$-Lagrange polynomial  $\phi^{i}:[-1,1]\rightarrow \R$ based on the $i$-th Gauss-Legendre quadrature node $\xi_i\in [-1,1]$. Let $w_{i}$ denote the weight associated with such quadrature node. By means of the bijection 
$$T_j:[-1,1]\rightarrow [S_j,S_{j+1}],\qquad \xi\mapsto \frac{S_{j+1}+S_j}{2} + \frac{\secondReviewer{h}}{2}\xi,$$
the basis element is defined as $\Phi_{j}^{i} :=  \phi^{i}\circ T_j^{-1}$.
As an example, we see that under this setting it holds
\begin{equation}
\label{eq:source term form}
\begin{aligned}
 \Hv_j(\Vnum,\Phi_{j}^{i}(S)) &= \int_{I_j} H(\Vnum)\Phi_{j}^{i}(S) \,dS \;
  = \; \dfrac{\secondReviewer{h}}{2}\int_{-1}^1 H \left( \Vnum \left( T_j(\xi) \right) \right) \, \phi^{i}(\xi)\,d\xi \\
 &\approx \dfrac{\secondReviewer{h}}{2} \sum_{i=0}^{k} w_{i} H (  \Vnum ( T_j(\xi_i) ) ) \, \phi^i(\xi_i) 
  \; = \; \dfrac{\secondReviewer{h}}{2} w_{j} H ( \Vnum_{j}^{i} ).
\end{aligned}
\end{equation}
The computation for the remaining terms of \eqref{eq:CHDK} are left to the reader.

The initial condition $\V_0(S)\in E_h$ is taken as an approximation of the given initial solution $g(S)$. In order to avoid errors due to the projection of the payoff on the described basis, the strike price $K$ must be a node of the mesh $\mathcal{T}_h$. This can be easily achieved by properly selecting $\Smax$, which should be also big enough to impose $\partial_{SS}\V(\Smax)=0$. Due to the expression of \eqref{eq:source term form}, the variational problem \eqref{eq:global variational form} can present non-linearities in $\Vnum$. However, solving nonlinear equations is avoided by coupling the spatial discretization with the IMEX time marching scheme. As we will show in the following section, this scheme treats explicitly the terms \eqref{eq:source term form} and $\mathcal{C}$, while it treats implicitly the remaining terms.
\subsection{IMEX time semidiscretization} \label{subsubsec:IMEXdiscr}
In this section, we present the fully-discrete LDG-IMEX method. Let
$$0=\tau^0<\cdots < \tau^n<\cdots  < \tau^L=T$$
be a uniform mesh of $[0,T]$ with constant step \secondReviewer{$\delta=T/L$, where $L$ is the number of time nodes. Moreover,} let $(\Vnum^n,q^n)$ denote $(\Vnum(\tau^n), q(\tau^n))$. Given $(\Vnum^n,q^n)$, the scheme produces a numerical solution at the next time level $\tau^{n+1}$ through intermediate numerical solutions denoted as $(\Vnum^l,q^l)$. We now provide the main steps required for the second and third-order schemes.

For the second-order scheme, for any function $(v,w)\in E_h\times E_h$, we consider the LDG method with the L-stable, two-stage DIRK(2,2,2) IMEX scheme given in \cite{ascher1997implicit}. The scheme is defined using the constants
$$\gamma = 1-\frac{\sqrt{2}}{2},\qquad 
\kappa=1-\frac{1}{2\gamma}.$$
The next time level solution $(\Vnum^{n+1},q^{n+1})$ is obtained after solving two linear systems. The first stage system, defined at the intermediate time $\tau^n+\delta \gamma$ is given by
\begin{equation}
\begin{cases} \label{eq:IMEXO2_stage1}
 \ip{\Vnum^{n,1}}{v} &= \ip{\Vnum^{n}}{v} + \delta\Big[ \gamma \D(q^{n,1},v) + \gamma (\C+\Hv)(\Vnum^n,v)\Big], \\
  \ip{q^{n,1}}{w} &= \Kv(\Vnum^{n,1},w), 
 \end{cases}
\end{equation}
and subsequently,  the next time-level solution is obtained by solving
  \begin{equation}
  \begin{cases} \label{eq:IMEXO2_stage2}
 \ip{\Vnum^{n+1}}{v} &= \ip{\Vnum^{n}}{v} + \delta\Big{[}  (1-\gamma)\D(q^{n,1},v) + \gamma \D(q^{n+1},v)  \\
 & \quad + \kappa (\C+\Hv)(\Vnum^n,v) + (1-\kappa)(\C+\Hv)(\Vnum^{n,1},v)\Big{]}, \\
 \ip{q^{n+1}}{w} &= \Kv(\Vnum^{n+1},w).
 \end{cases}
 \end{equation}

Finally, the third-order IMEX scheme is taken from \cite{calvo2001linearly}, and it is defined \secondReviewer{employing} the constants
\begin{align*}
\gamma=\frac{1767732205903}{4055673282236} ,&\qquad  \beta_1 = -\frac32\gamma^2+4\gamma-\frac14, \qquad \beta_2=\frac32\gamma^2-5\gamma+\frac54,\\
& \alpha_1=-0.35,  \qquad \alpha_2 = \dfrac{\frac13 - 2 \gamma^2 - 2\beta_2 \alpha_1 \gamma}{\gamma (1-\gamma)}.
\end{align*}
The IMEX-LDG scheme consists of solving three stages of linear systems. Namely, at time  $\tau^{n}+\delta \gamma$ we have 
\begin{equation}
\begin{cases} \label{eq:IMEXO3_stage1}
    \ip{\Vnum^{n,1}}{v} &= \ip{u^{n}}{v} + \delta\Big[ \gamma \D(q^{n,1},v)+ \gamma (\C+\Hv)(u^n,v) \Big], \\
  \ip{q^{n,1}}{w} &= \Kv(\Vnum^{n,1},w).
\end{cases}
\end{equation}
Subsequently, at time $\tau^{n}+\delta \frac{1+\gamma}{2}$ we solve
\begin{equation}
\begin{cases} \label{eq:IMEXO3_stage2}
 \ip{\Vnum^{n,2}}{v}= &\ip{\Vnum^{n}}{v}+ \delta\Big[ \frac{1-\gamma}{2} \D(q^{n,1},\Vnum) + \gamma \D(q^{n,2},\Vnum) \\
 & \left( \frac{1+\gamma}{2} - \alpha_1 \right)(\C+\Hv)(\Vnum^n,v) + \alpha_1 (\C+\Hv)(\Vnum^{n,1},v)\Big],\\
  \ip{q^{n,2}}{w} =& \Kv(\Vnum^{n,2},w) ,
 \end{cases}
 \end{equation}
 and at the next time step $\tau^{n}+\delta$
\begin{equation}
\begin{cases} \label{eq:IMEXO3_stage3}
\ip{\Vnum^{n,3}}{v} =& \ip{u^{n}}{v}  +\delta \Big[\beta_1 \D(q^{n,1},\Vnum) + \beta_2 \D(q^{n,2},\Vnum) + \gamma \D(q^{n,3},\Vnum) \\
&+(1-\alpha_2) (\C+\Hv)(\Vnum^{n,1},v) + \alpha_2 (\C+\Hv)(\Vnum^{n,2},v)\Big],  \\
\ip{q^{n,3}}{w} =& \Kv(\Vnum^{n,3},w).
\end{cases}
\end{equation}
Finally 
\begin{align}
    \ip{\Vnum^{n+1}}{v} = \ip{u^{n}}{v}  +\delta & \Big[  \beta_1 \D(q^{n,1},\Vnum) + \beta_2 \D(q^{n,2},\Vnum) + \gamma \D(q^{n,3},\Vnum) \label{eq:IMEXO3_Final}\\
 &+ \beta_1(\C+\Hv)(\Vnum^{n,1},v) + \beta_2 (\C+\Hv)(\Vnum^{n,2},v) + \gamma  (\C+\Hv)(\Vnum^{n,3},v)\Big], \nonumber
\end{align}
and hence $\ip{q^{n+1}}{w}=\Kv(\Vnum^{n+1},w)$.

In order to ensure the stability of the method and measure its order of convergence we imposed a Courant–Friedrichs–Lewy (CFL) condition of the form
\begin{equation}
\label{eq:cfl}
\delta \leq  \frac{C \secondReviewer{h}}{(2k+1)\,|\sigma^2-\beta|\, \bar{S}},
\end{equation}
where $C \leq 1$ is a positive constant and $|\sigma^2-\beta|\bar{S}=\max_{S\in [0,\bar{S}]}|(\partial_{\V}f(S,\V)|$. Note that this condition implies that for a given level of refinement in the spatial mesh we must consider a small enough time step. \secondReviewer{Stability analysis for LDG-IMEX methods has been addressed in \cite{wang2015stability,wang2016stability}, for example.}

\section{Numerical results} \label{sec:numResults}
The results here presented are related to problems contained in Subsection \ref{subsec:close-out} and thus reformulated in Section \ref{sec:PDE and BSDE}.  We consider a European Call and a Put option, whose price is adjusted with an XVA, including KVA. \secondReviewer{Unless otherwise indicated,} the chosen parameters to define the model are shown in Table \ref{tab:param}.
In all numerical examples, \secondReviewer{as we consider the strike $K=15$}, the results have been obtained with $\overline{S}=60$ for domain truncation, \secondReviewer{i.e., $\overline{S}=4K$}.
\begin{table}[h!]
\centering
\begin{tabular}{lllllll}
\hline
$T=1$ year & $\sigma=0.3$ & $r=0.06$ & $\gamma_S=0$ & $q_S=0.06$ &  $R^B=0.7$ & $R^C=0.78$ \\
$\lambda^B=0.00133$ & $\lambda^C=0.0103$ & $\gamma_X=0.9$ & $r^X=0.07$ & $\gamma^\K=0.15$ & $\varphi=1$ & $\eta=0.08$ \\ 
LR $=0.03$ & $\omega= 0.75$ & $\alpha=1.4$ & $\SF=0.32$ &  $\sigma_r=1.5$ & $\text{RW}=0.05$ & \secondReviewer{$K=15$}\\
\hline
\end{tabular}
\caption{Value of all relevant parameters of the KVA-model.}
\label{tab:param}
\end{table}

\secondReviewer{In all numerical tests we have considered the time step equal to
\begin{equation}
\label{eq:deltal}
\delta =  \frac{0.5 h}{(2k+1)\,|\sigma^2-\beta|\, \bar{S}},
\end{equation}
thus satisfying the CFL condition \eqref{eq:cfl} for $C=0.5$. Note that the value of $\delta$ implies the one of $L$.
}

\secondReviewer{Plots of the XVA are shown in Figure \ref{fig:XVAs}, while the delta ($\Delta$) and the gamma ($\Gamma$) of the risky derivative are presented in Figures \ref{fig:deltas} and \ref{fig:gammas}, respectively. Both linear and nonlinear cases are presented}. \secondReviewer{These plots have been obtained by solving the system \eqref{eq:system_PDE} with the second-order LDG-IMEX \secondReviewer{($k=1$, IMEX scheme \eqref{eq:IMEXO2_stage1}-\eqref{eq:IMEXO2_stage2})} with $N=640$ and $L=115$. The value of the XVA is then obtained as the difference between the risky and risk-free derivative.} As expected, the total value adjustment is negative, and its module increases with the moneyness of the product. The numerical computations of the \secondReviewer{Greeks of the options, $\Delta$ and $\Gamma$}, show that our results are free from spurious numerical oscillations, a crucial point in hedging. 

Table \ref{tab:err and eoc} shows empirical errors and orders of convergence in $\Leb^2$ and $\Leb^\infty$ norms \secondReviewer{computed in the whole spatial domain} for different \secondReviewer{time and spatial }meshes, by using the reference solution computed with an enough refined mesh with $N=1280$ \secondReviewer{and $L=230$}. These results have been obtained with the second-order LDG-IMEX scheme \secondReviewer{($k=1$, IMEX scheme \eqref{eq:IMEXO2_stage1}-\eqref{eq:IMEXO2_stage2})}, for which we recover the theoretical order of convergence. 

A further check of the goodness of our results is done in 
Table \ref{tab:pdf vs fbsde}. Here we compare the results obtained by solving \secondReviewer{\eqref{eq:system_PDE}} with the ones obtained by solving the corresponding FBSDE. This latter equivalent formulation is solved through the stratified regression Monte-Carlo algorithm proposed in \cite{gltc2016}, which is based on least-squares Monte-Carlo and is very well suited for parallel computing. As expected by the uniqueness results of Section \ref{sec:PDE and BSDE} both methods converge to the same solution. Concerning the difference between the linear and the semilinear model in the value of the adjustment, it has the order of some basis points. From Table \ref{tab:pdf vs fbsde} and  Figure \ref{fig:XVAs}, we conclude that this difference increases with moneyness.

For the third-order scheme \secondReviewer{($k=2$, IMEX scheme \eqref{eq:IMEXO3_stage1}-\eqref{eq:IMEXO3_Final})}, Table \ref{tab:put order3} contains results for the nonlinear system \secondReviewer{\eqref{eq:system_PDE}} of a put option with the data in Table \ref{tab:param}. \secondReviewer{In this table we present results not only for the same tests as before but also for a convection-dominated setting (same market data presented in Table \ref{tab:param} although with lower volatility, set to $\sigma=0.05$). Note that the expected order of convergence is achieved.}

\begin{table}
\begin{center}
    \begin{tabular}{|c|c|c|c|c|c||c|c|c|c|}
    \hline
     $N$ & \secondReviewer{$L$} & $\Leb^2$-err & EOC & $\Leb^\infty$-err & EOC & $\Leb^2$-err & EOC & $\Leb^\infty$-err & EOC \\
    \hline
    \multicolumn{6}{|c||}{Put-linear} & \multicolumn{4}{|c|}{Put-nonlinear}  \\
\hline

10 & 1 & 5.544e-01 & - & 2.694e-01 & - & 5.541e-01 & - & 2.692e-01 & - \\
20 & 3 & 1.096e-01 & 2.339 & 4.260e-02 & 2.661 & 1.096e-01 & 2.338 & 4.260e-02 & 2.659 \\
40 & 7 & 2.787e-02 & 1.975 & 1.070e-02 & 1.993 & 2.787e-02 & 1.975 & 1.070e-02 & 1.993 \\
80 & 14 & 7.036e-03 & 1.986 & 2.718e-03 & 1.978 & 7.036e-03 & 1.986 & 2.718e-03 & 1.978 \\
160 & 28 & 1.792e-03 & 1.973 & 6.939e-04 & 1.97\secondReviewer{0} & 1.792e-03 & 1.973 & 6.939e-04 & 1.97\secondReviewer{0} \\
320 & 57 & 4.783e-04 & 1.905 & 1.903e-04 & 1.866 & 4.783e-04 & 1.905 & 1.903e-04 & 1.866 \\
640 & 115 & 1.055e-04 & 2.18\secondReviewer{0} & 4.837e-05 & 1.976 & 1.055e-04 & 2.18\secondReviewer{0} & 4.837e-05 & 1.976 \\
     \hline
 \multicolumn{6}{|c||}{Call-linear} & \multicolumn{4}{|c|}{Call-nonlinear}  \\
    \hline
10 & 1 &  1.592e+00 & - & 3.410e-01 & - & 1.602e+00 & - & 3.436e-01 & - \\
20 & 3 & 2.472e-01 & 2.687 & 4.888e-02 & 2.802 & 2.471e-01 & 2.697 & 4.887e-02 & 2.814 \\
40 & 7 & 5.354e-02 & 2.207 & 1.173e-02 & 2.059 & 5.344e-02 & 2.209 & 1.174e-02 & 2.058 \\
80 & 14 & 1.443e-02 & 1.891 & 3.000e-03 & 1.968 & 1.440e-02 & 1.892 & 3.001e-03 & 1.968 \\
160 & 28 & 3.995e-03 & 1.853 & 8.104e-04 & 1.888 & 3.986e-03 & 1.853 & 8.082e-04 & 1.893 \\
320 & 57 & 9.255e-04 & 2.11\secondReviewer{0} & 1.992e-04 & 2.025 & 9.236e-04 & 2.11\secondReviewer{0} & 1.992e-04 & 2.021 \\
640 & 115 & 1.576e-04 & 2.554 & 5.095e-05 & 1.967 & 1.573e-04 & 2.553 & 5.095e-05 & 1.967 \\
\hline
\end{tabular}
\end{center}
\caption{Error and empirical order of convergence (EOC) of the second order LDG-IMEX scheme. Results for $\Leb^2$ and $\Leb^\infty$ norms are presented for both linear PDE (on the left) and nonlinear PDE (on the right). The considered products are European Call and Put options solving \secondReviewer{\eqref{eq:system_PDE} with the parameters in Table \ref{tab:param}}. The solution obtained with $1280$ cells in space and $230$ time steps is taken as the reference solution.}
\label{tab:err and eoc}
\end{table}

\begin{table}
\centering
\begin{tabular}{|c|c|c|c|c|c|c|c|c|}
\hline
 & \multicolumn{2}{c}{Put-linear} & \multicolumn{2}{c|}{Put-nonlinear} &\multicolumn{2}{c}{Call-linear} & \multicolumn{2}{c|}{Call-nonlinear}\\
 \hline
$S$ & FBSDE & PDE &  FBSDE & PDE & FBSDE & PDE &  FBSDE & PDE \\
 \hline

 $5$  & -1.286e-01  & -1.266e-01 & -1.282e-01  &  -1.260e-01& -2.593e-02 & -2.557e-02 & -2.540e-02 & -2.555e-02\\
 $10$ &-5.070e-02  & -5.004e-02 &-5.126e-02  & -5.000e-02& -1.124e-02 & -1.127e-01& -1.121e-01& -1.123e-01 \\
 $15$ & -1.407e-02  &-1.395e-02 &-1.428e-02 & -1.395e-02& -2.598e-01& -2.624e-01  & -2.593e-01 &-2.615e-01 \\
 $20$ & -3.105e-03& -3.016e-03 &-3.117e-03  & -3.017e-03 & -4.517e-01&-4.571e-01 & -4.508e-01 & -4.555e-01\\
 $30$ &-1.159e-04  & -1.134e-04 & -1.177e-04 &-1.134e-04 & -8.680e-01 &-8.774e-01 & -8.673e-01& -8.742e-01\\
 $60$ & -7.540e-09  & -3.066e-09 & -7.517e-09 &-3.068e-09 & -2.101\secondReviewer{e-00} &-2.101\secondReviewer{e-00} & -2.094\secondReviewer{e-00} &-2.093\secondReviewer{e-00}\\
\hline
\end{tabular}
 \caption{For a given value of $S$ this table shows the value of the XVA at $t=0$ by solving the PDE or the FBSDE \secondReviewer{for the risky derivative with the data in Table \ref{tab:param}}. PDE values are given by the second-order LDG-IMEX \secondReviewer{ with $1280$ spatial cells and $230$ time steps}. Setup of Stratified Monte Carlo algorithm: piecewise linear approximations on $500$ cells of the spatial domain $[e^{-5},e^5]$. Besides, $10$ thousand simulations per cell and $20$ time steps were employed. }
\label{tab:pdf vs fbsde}
\end{table}
\begin{figure}
\centering
\includegraphics{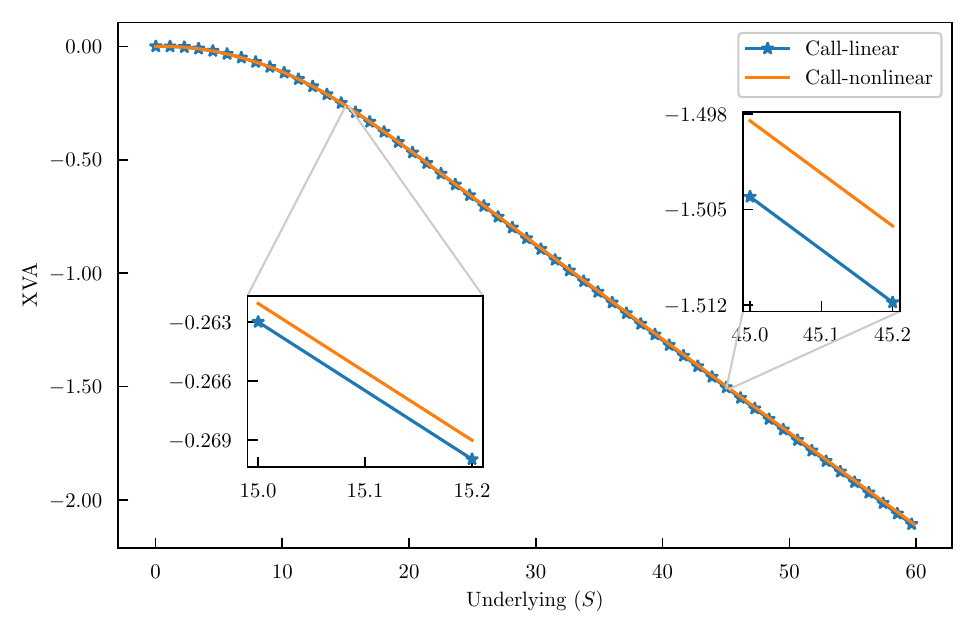}
\includegraphics{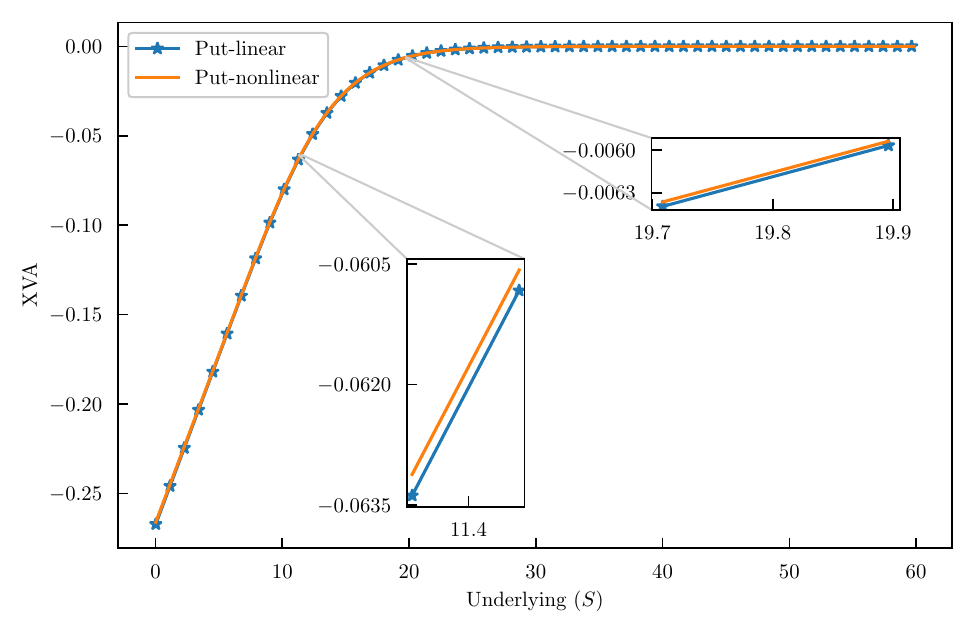}
\caption{Plot of the XVA at time $t=0$ for Call (above) and Put (below) European options with the data in Table \ref{tab:param}. Both the linear and the nonlinear cases are considered. }
\label{fig:XVAs}
\end{figure}

\begin{figure}
\centering
\includegraphics{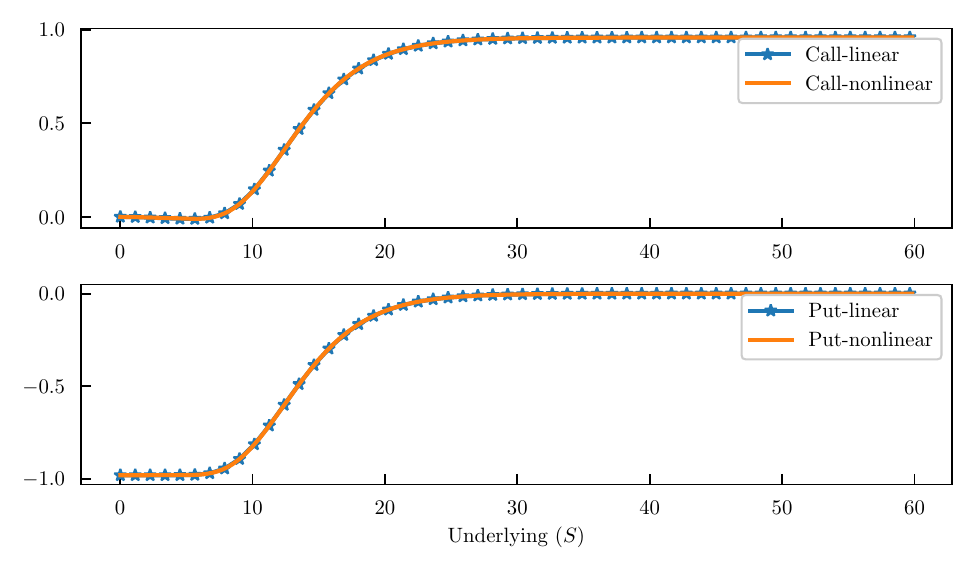}
\caption{Plot of $\Delta$ at time $t=0$ for Call (above) and Put (below) European options with the data in Table \ref{tab:param}. Both the linear and the nonlinear cases are considered. }
\label{fig:deltas}
\end{figure}

\begin{figure}
\centering
\includegraphics{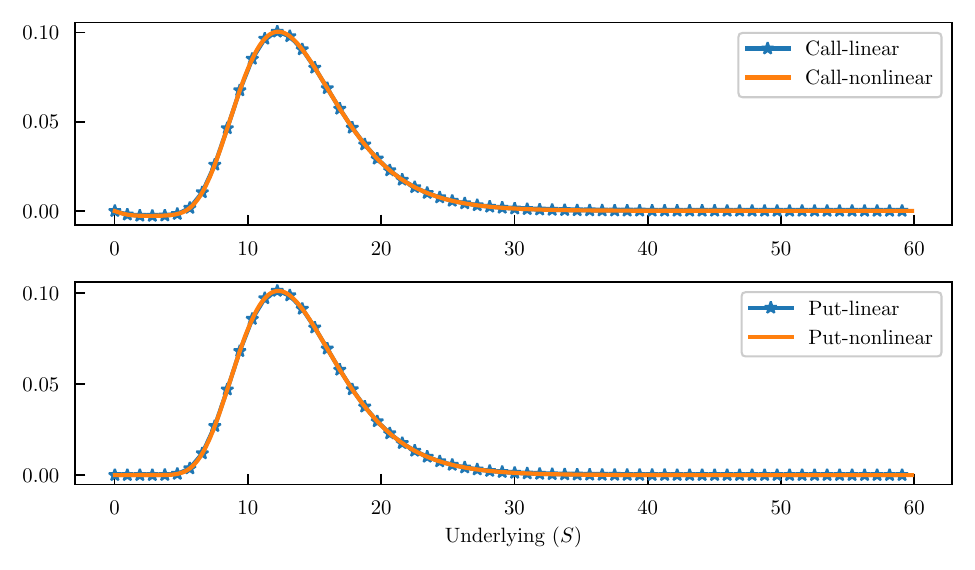}
\caption{\secondReviewer{Plot of $\Gamma$ at time $t=0$ for Call (above) and Put (below) European options with the data in Table \ref{tab:param}. Both the linear and the nonlinear cases are considered. }}
\label{fig:gammas}
\end{figure}
\begin{table}
\centering
\begin{tabular}{|c|c|c|c|c|c|c|c|c|c|c|}

    \hline
    & \multicolumn{5}{|c|}{$\sigma=0.3$}  & \multicolumn{5}{|c|}{\secondReviewer{$\sigma=0.05$}} \\
    \hline
     $N$ & \secondReviewer{$L$} & $\Leb^2$-err & EOC & $\Leb^\infty$-err & EOC & \secondReviewer{$L$} & $\Leb^2$-err & EOC & $\Leb^\infty$-err & EOC\\
    \hline
10 & 2 & 1.606e-01 & - & 6.327e-02 & - & 5 & 3.330e-01 & - & 1.747e-01 & - \\
20 & 5 & 5.097e-03 & 4.977 & 2.052e-03 & 4.947 & 11 & 4.868e-02 & 2.774 & 3.686e-02 & 2.245\\
40 & 11 & 6.648e-04 & 2.939 & 3.435e-04 & 2.578 & 22 & 4.922e-03 & 3.306 & 4.234e-03 & 3.122 \\
80 & 23 & 8.501e-05 & 2.967 & 4.514e-05 & 2.928 & 45 & 9.619e-04 & 2.355 & 7.041e-04 & 2.588 \\
160 & 47 & 1.090e-05 & 2.963 & 5.250e-06 & 3.104  & 91 & 1.222e-04 & 2.976 & 1.053e-04 & 2.741 \\
320 & 95 & 1.444e-06 & 2.916 & 6.143e-07 & 3.095 & 183 & 1.545e-05 & 2.984 & 1.376e-05 & 2.936 \\
640 & 190 & 1.816e-07 & 2.991 & 7.698e-08 & 2.996  & 367 & 1.936e-06 & 2.997 & 1.726e-06 & 2.995 \\
\hline
\end{tabular}
\caption{Error and empirical order of convergence (EOC) of the third-order LDG-IMEX scheme. Results for $\Leb^2$ and $\Leb^\infty$ norms are presented for Put-nonlinear.  Reference solutions with $1280$ cells in space, and $383$ and $735$ time steps when $\sigma=0.3$ and $0.05$, respectively.}
\label{tab:put order3}
\end{table}


\firstReviewer{Next, we illustrate the sensitivity of the option prices and XVA with respect to different parameters. First, in Figures \ref{fig:deltas_sigma} and \ref{fig:gammas_sigma} we show the graphs of deltas and gammas of the options for different values of the volatility parameter, ranging from $5\%$ to $30\%$ ({\it ceteris paribus} the other parameters in Table \ref{tab:param}). These figures show the transition to a convection-dominated setting where larger slopes of both Greeks are observed without any spurious numerical oscillations.
Secondly, also maintaining the rest of the parameters as in Table \ref{tab:param}, in Figure \ref{fig:KVA_comparison} we represent the XVA for different values of the cost of capital $\gamma^\K$, ranging from $6\%$ to $25\%$. The case $\gamma^\K=6\%$ represents the absence of KVA. We observe that, as the cost of capital increases, the XVA becomes more negative and the adjustment is more severe, especially at the \textit{in the money} regions. 
Thirdly, in Figure \ref{fig:CRA_comparison} we show the XVA for different values of the collateral rate $r^{X}$, ranging from $6\%$ to $10\%$. The case $r^{X}=6\%$ represents no consideration of the CRA. 
Analogously to the previous case, the adjustment is more severe for larger values of the collateral rate, and its influence increases with the moneyness.}
\begin{figure}
\centering
\includegraphics{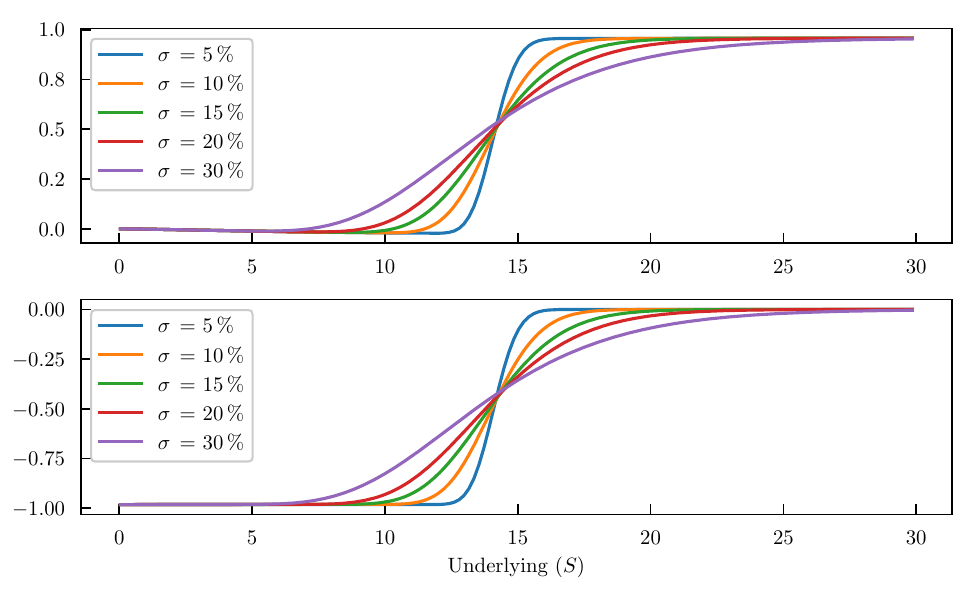}
\caption{Variation of $\Delta$ with respect to the stock volatility $\sigma$. Nonlinear PDE for the Call (above) and Put (below) options. } 
\label{fig:deltas_sigma}
\end{figure}
\begin{figure}
\centering
\includegraphics{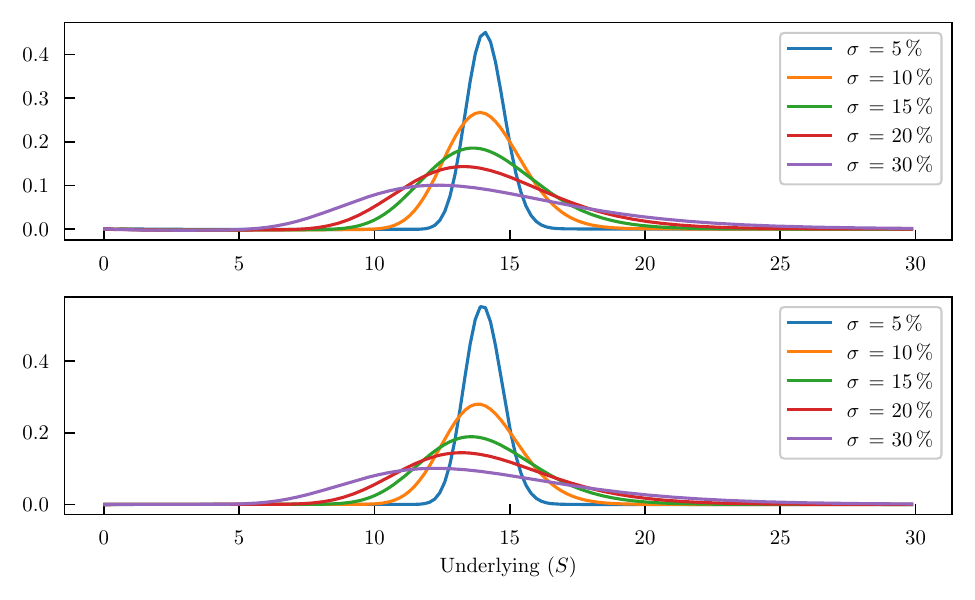}
\caption{Variation of $\Gamma$ with respect to the stock volatility $\sigma$. Nonlinear PDE for the Call (above) and Put (below) options. } 
\label{fig:gammas_sigma}
\end{figure}

\begin{figure}
\centering
\includegraphics{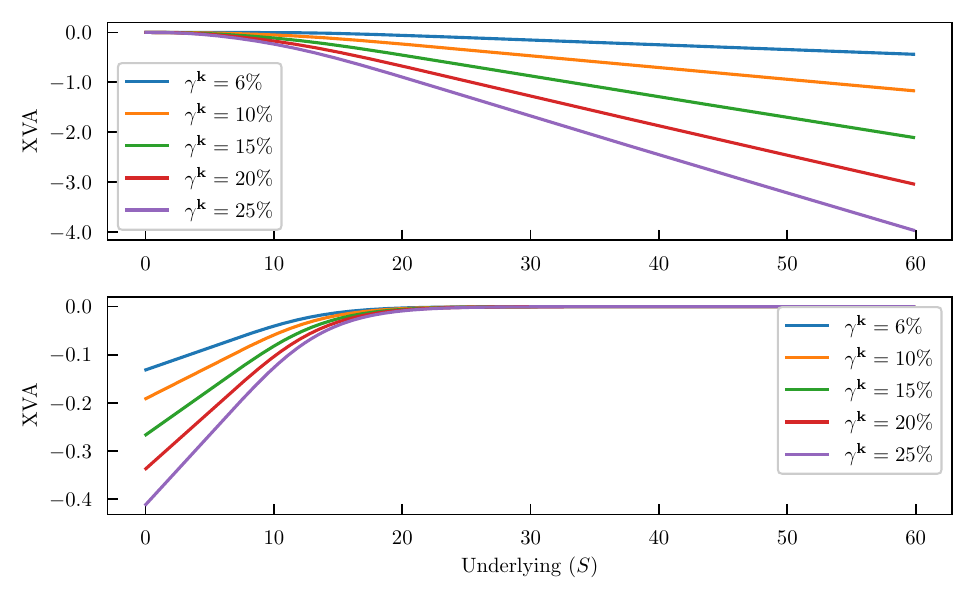}
\caption{Variation of XVA with respect to the cost of capital $\gamma^\K$. Nonlinear PDE for the Call (above) and Put (below) options.} 
\label{fig:KVA_comparison}
\end{figure}
\begin{figure}
\centering
\includegraphics{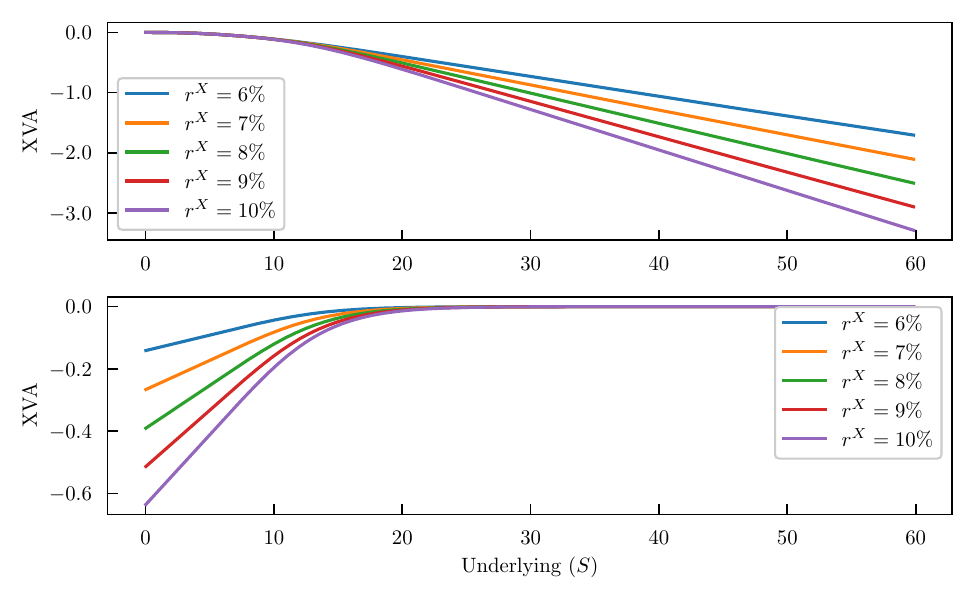}
\caption{Variation of XVA with respect to the collateral rate $r^X$. Nonlinear PDE for the Call (above) and Put (below) options.} 
\label{fig:CRA_comparison}
\end{figure}
\newpage

\appendix

\section{Admissible strategies and lack of arbitrage} 
\label{sec:lackofarb}
In Subsection \ref{subsec:asset-dividend} we introduced the couple $(Y,D)$ describing the prices and additional cashflows of the assets available in the economy. Then in Subsection \ref{subsec:semi-repl} we deduced a model to price a European derivative considering risks and costs which are not included in the classic Black-Scholes model. In the construction of a hedging portfolio $\theta$ we impose a single position in the collateral and capital account (see \eqref{eq:port-notation}). This condition for $\theta$ is valid in applications; in fact, every derivative can be associated with one collateral account and one capital requirement. In this part, we want to emphasize the relevance of this constraint from a theoretical market perspective. We claim that this constraint is necessary for the economy to be arbitrage-free.
In the first place, our model of economy contains a nonzero cumulative-dividend-process $D$, and a riskless account $B$. This latter can be used as a numeraire having zero diffusion. For this kind of economy, in line with \cite{duffie},
we give the following definition of Equivalent Martingale Measure (EMM).
\begin{definition}
\label{def:EMM}
Let $(Y,D)$ be a couple defined on $( \Omega, \F,\prob, (\F_t)_{t\in [0,T])})$ for which there exists a numeraire $B$ with zero diffusion. An equivalent martingale measure (EMM) of $(D,S)$ is a probability measure $\Q$ equivalent to $\prob$ such that
\begin{itemize}
\item $G^B=Y^B+D^B$ is a $\Q$-martingale,
where $Y^B_t:=\frac{Y_t}{B_t}$ and $dD^B_t=\frac{dD_t}{B_t}$. 
\item $\frac{d\Q }{d\prob}$ has finite variance.
\end{itemize}
\end{definition}
When $D=0$ this definition coincides with the one in the classical reference of \cite[Sec 2.1]{brigo2001interest}. 
\begin{remark}
For numeraires having diffusion, such as Zero-Coupon-Bond with diffusive short-rate, an additional term is needed in $dD^B$ in the above definition. This is necessary to guarantee a kind of unit invariance principle. 
\end{remark}
The assumption that an EMM measure exists is a standard practice in the financial industry. If the numeraire is bounded, such an assumption guarantees the absence of arbitrages in the space $\mathcal{H}^2(G)$ of square-integrable strategies.
For a complete definition of this space, we address the reader to \cite[Ch. 5-6]{duffie}. In particular, it contains strategies for which the stochastic integral with respect to the semi-martingale $G:=Y+D$ is well-defined, and portfolios are required to satisfy an integrability condition.
However, it is not hard to see that for the previously described economy, such an EMM measure cannot exist.
We notice that in the differential \eqref{eq:gain diff} the gain for the collateral and the capital account might grow at a rate that differs from the risk-free rate. 

By taking the collateral account $X$ as an example then
$$d\left(\dfrac{X_t}{B_t}\right)+ \dfrac{dD^X_t}{B_t}=\dfrac{dX_t}{B_t}-r_t\dfrac{X_t}{B_t}\, dt + r^X_t\frac{X_t}{B_t} - \frac{dX_t}{B_t}=(r_t-r^X_t)\frac{X_t}{B_t}\, dt,$$
which cannot be a martingale wrt any measure $\Q$, unless $r_t=r^X_t$. The same argument is valid for the capital account.
If there are accounts having different yields in the same economy, the existence of arbitrages becomes straightforward. To avoid this problem a restriction to the set of admissible strategies is then required. For $(Y,D)$ as in Subsection \ref{subsec:asset-dividend}, a trading strategy is an adapted process $\theta=(\theta_1,\theta_2,\ldots, \theta_7)$. Then the set of admissible strategies can be chosen as
$$ A:= \mathcal{H}^2(G)\cap \left\{\theta :\, \theta_1+\theta_2=\theta_1+\theta_3=0 \right\},$$
i.e., by imposing that each $\V$ is attached to one $X$ and $CA$.
\begin{prop}
Let $(Y, D)$ be as before with the component $\V$ in $Y$ solving the general XVA-adjusted price PDE \eqref{eq:general PDE price K} (whose existence is guaranteed by Corollary \ref{cor:existence classical sol}).
Then the economy of couple $(Y, D)$ and admissible strategies in $A$ is arbitrage-free.
\end{prop}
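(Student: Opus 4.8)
The plan is to produce a probability measure $\Q\sim\prob$ which, although \emph{not} an equivalent martingale measure for the whole economy in the sense of Definition~\ref{def:EMM} (no such measure exists, as noted above), nevertheless turns the discounted gain of \emph{every} admissible strategy $\theta\in A$ into a $\Q$-martingale; the absence of arbitrage then follows by the classical argument. Concretely, I would set $\frac{d\Q}{d\prob}=\exp\!\big(-\vartheta W_T-\tfrac12\vartheta^2 T\big)$ with the constant market price of risk $\vartheta=\frac{\mu-(q_S-\gamma_S)}{\sigma}$, leaving the intensities of $J^B$ and $J^C$ unchanged. This density has finite variance (it has all exponential moments), so $\Q$ is admissible in Definition~\ref{def:EMM}, and under $\Q$ one has $dS=(q_S-\gamma_S)S\,dt+\sigma S\,dW^\Q$ for a $\Q$-Brownian motion $W^\Q$.

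First I would record the building-block gains. The riskless account contributes nothing to $G^B$. For $\REPO^C$ and $\PB$ the zero-basis relations \eqref{eq:spread-intensity} make the corresponding components of $dG^B$ equal to compensated-jump ($\prob$-, hence also $\Q$-) martingale differentials; for $\REPO^S$ the corresponding component becomes $\tfrac{\sigma S}{B}\,dW^\Q$ after the change of measure. The collateral and capital accounts are built so that $dX$ (resp. $d(CA)$) cancels between the price change and the dividend, leaving the pure drifts $B\,dG^{B,X}=(r^X-r)X\,dt$ and $B\,dG^{B,CA}=(\gamma^{\K}-\varphi r)\K\,dt$, with no martingale part.

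The crux is the derivative component. Applying the It\^o formula for jump-diffusions to $\V$ as in \eqref{eq:Itô V}, substituting the pricing PDE \eqref{eq:general PDE price K} for $\partial_t\V+\A^{q_S-\gamma_S}\V$, adding the dividend $dD^{\V}=-\epsilon^H\,dJ^B$ with $\epsilon^H$ from \eqref{eq:def hedg err}, compensating the jumps under $\Q$, discounting, and using $\lambda^B(1-R^B)=r^B-r$, the drift of $B\,dG^{B,\V}$ should collapse to precisely $(r^X-r)X+(\gamma^{\K}-\varphi r)\K$: the $\Delta_C\V$ contribution cancels against the $J^C$-compensator, while the $\V$ and $\Delta_B\V$ terms cancel identically. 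Consequently, for $\theta\in A$, where $\theta_2=\theta_3=-\theta_1$,
$$\theta_1\,dG^{B,\V}+\theta_2\,dG^{B,X}+\theta_3\,dG^{B,CA}=\theta_1\big(dG^{B,\V}-dG^{B,X}-dG^{B,CA}\big)$$
has vanishing drift, so $\theta\cdot dG^B$ is a $\Q$-local martingale; since $\theta\in\mathcal{H}^2(G)$, $B$ is bounded on $[0,T]$, and $\V$ together with its $S$-derivative has at most linear growth by Corollary~\ref{cor:existence classical sol}, this is in fact a true $\Q$-martingale.

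Finally, suppose $\theta\in A$ were an arbitrage with value process $\Pi$, $\Pi_0=\theta_0\cdot Y_0\le 0$ and $\Pi_T\ge 0$ $\prob$-a.s. Since $\Q\sim\prob$ and $\Pi/B$ is a $\Q$-martingale,
$$0\ge \frac{\Pi_0}{B_0}=\E^\Q\!\Big[\frac{\Pi_T}{B_T}\Big]\ge 0,$$
so $\Pi_T=0$ $\Q$-a.s., hence $\prob$-a.s.; the same computation excludes $\Pi_0<0$ with $\Pi_T\ge 0$. Thus the economy admits no arbitrage in $A$. I expect the only delicate points to be the drift bookkeeping in the third step — where the collateral- and capital-funding residuals produced by \eqref{eq:general PDE price K} must match the gains of the $X$- and $CA$-accounts exactly, which is precisely why the constraint $\theta_1+\theta_2=\theta_1+\theta_3=0$ is needed — and the local-to-true martingale upgrade under the change of measure, for which the linear-growth control of Corollary~\ref{cor:existence classical sol} and the definition of $\mathcal{H}^2(G)$ are enough.
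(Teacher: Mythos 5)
Your proposal is correct and follows essentially the same route as the paper: the same measure $\Q$ (Girsanov shift on $W$ only, jump intensities already risk-neutral by the zero-basis relations), and the same key computation in which the PDE \eqref{eq:general PDE price K}, It\^o's formula, and $\lambda^B(1-R^B)=r^B-r$ reduce the drift of the discounted derivative gain to exactly $(r^X-r)X+(\gamma^{\K}-\varphi r)\K$, so that it cancels against the collateral and capital accounts under the constraint $\theta_2=\theta_3=-\theta_1$. The only (cosmetic) difference is that the paper packages $\V-X-CA$ as a single asset in an auxiliary economy $(Y',D')$ and invokes the EMM-implies-no-arbitrage theorem of Duffie for that reduced economy, whereas you verify the martingale property of $\theta\cdot G^B$ directly and run the classical argument by hand.
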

\begin{proof}
To the aim of this proof, we recall the definition of hedging error
\begin{equation*}
\epsilon_H:=\Delta^B \V + (1-R^B_t)(\V_t-X_t-CA)_t)
\end{equation*}
used in the dividend process related to $\V$.
In order to prove our statement we make use of an equivalent auxiliary couple $(Y',D')$. We define $Y'$ as $$Y':=(\V-X-CA, \REPO^S,\REPO^C, \PB, B)$$ 
and $D'$ analogously. The gain process is then given by $G'=Y'+D'$. 
We notice that for every $\theta=(\theta_1,\theta_2,\ldots, \theta_7)\in A$ we can associate $\theta'=(\theta_1,\theta_4,\theta_5,\theta_6,\theta_7)\in \mathcal{H}^2(G')$
such that
$$\theta \cdot Y= \theta' \cdot Y'.$$
Furthermore, the gain process generated by any $\theta\in A$ in $(Y, D)$ and the gain of the corresponding $\theta'$ in $(Y', D')$ are equal as well. As a result in $(Y,D)$ there are no arbitrages in $A$ if and only if in $(Y', D')$ there are no arbitrages in $\mathcal{H}^2(G)$.
The statement then follows by the Theorem in \cite[Ch. 6, Sec. F]{duffie} once we prove the existence of an EMM of $(Y', D').$ With regard to the component $\REPO^S$ of $Y$, the existence of an EMM $\Q$ is a well-known result under standard assumption on $\mu$ and $\sigma$. Under this measure, the dynamic of $S$ in the dividend process $dD^{\REPO^S}$ becomes
\begin{equation}
dS_t=(q_S-\gamma_S) S_t \,dt+ \sigma S_t \,dW^\Q_t,
\end{equation}
for a $\Q$-Brownian motion $W^\Q$.
The fact that $\Q$ is an EMM also on the components  $\REPO^C$ and $\PB$ follows immediately by \eqref{eq:spread-intensity}, while it is obvious for the riskless account $B$. It remains to verify that the gain $G^B$ is also a $\Q$-martingale in the component of $\V-X-CA$.
In this case, we have
\begin{equation}
\begin{aligned}
d(G'^B_1)_t&=d\left(\frac{(\V-X-CA)_t}{B_t}\right)+ \frac{d (D^{\V}-D^X-D^{CA})_t}{B_t}=
\\
&= \frac{1}{B_t}\left(d\V_t-dX_t-d(CA)_t-r_t (\V_t-X_t-(CA)_t)\, dt\right) \\
&+ \frac{1}{B_t}\left( -\epsilon^H_t dJ^{B}_t+dX_t- r^X_t X_t \, dt + d(CA)_t-\gamma^\K_t \K_t \, dt \right)\\
&=\frac{1}{B_t}\left( d\V_t-r^X_t X_t\, dt-\gamma^\K_t \K_t\, dt -\epsilon^H_t dJ^B_t-r_t (\V_t-X_t-\varphi \K_t)\, dt\right).
\end{aligned}
\end{equation} 
By Itô's Formula applied to jump-diffusion processes, we have
$$d\V_t=\left(\partial_t \V+\A^{(q_S-\gamma_S)}\V_t\right)\, dt +\Delta_C \V_t dJ^{C}_t +\Delta_B \V_t +\sigma S_t \frac{\partial \V}{\partial S} dW^\Q_t.$$
It stems from \eqref{eq:bound first der} that the last term $\sigma S_t \displaystyle \frac{\partial \V}{\partial S} dW^\Q_t$ constitutes a $\Q$-martingale. From \eqref{eq:general PDE price K}, after some simplifications we obtain
\begin{equation}
\begin{aligned}
d(G'^B_1)_t=&\frac{1}{B_t} \left( (r^B_t-r_t)(\V_t-X_t-\varphi \K_t) - \lambda^C_t \Delta_C\V_t \right)\, dt -\frac{1}{B_t}\left(\Delta_C \V_t \, dJ^C_t+ (\Delta^B \V_t-\epsilon_H)\, dJ^{B}_t \right).
\end{aligned}
\end{equation}
We observe that from \eqref{eq:def hedg err} and \eqref{eq:spread-intensity} we get
$$(r^B_t-r_t)(\V_t-X_t-\varphi \K_t)=\lambda^B_t(1-R^B_t)(\V_t-X_t-\varphi \K_t)=\lambda^B_t(\epsilon_H-\Delta_B\V_t),$$
and thus we have
\begin{equation}
\begin{aligned}
d(G'^B_1)_t&=\frac{1}{B_t} \left( \Delta_C\V( dJ^C_t-\lambda^C_t\, dt)+ \Delta_B\V_t (dJ^{B}_t-\lambda^B_t\, dt) -\epsilon^H_t(dJ^{B}_t-\lambda^B_t\, dt)\right).
\end{aligned}
\end{equation}
Since all jumps are compensated with their intensity then $G_1'^B$ is a $\Q$-martingale. This proves that $G'^B$ is a $\Q$-martingale and this concludes the proof.
\end{proof}
\section{Definition of Capital}
\label{sec:def of capital}
In this part, we give some regulatory generalities to compute capital requirements and apply them to obtain an explicit expression of $\K$ for a European Vanilla Option. This section does not cover all the possible cases contemplated by official regulation, so we readdress the reader to Basel documentation for further details. In general, capital requirement represents the minimum amount put into place by regulators to ensure that the firm does not take on excessive risk. The types of risk are contained in the decomposition of $\K$ itself. Specifically, the requirement that most
derivative trades are subject to is divided as follows
\begin{equation}
\label{eq:gen def of capital}
\K:=\max\left(\K_{MR}+\K_{CCR}+\K_{\CVA},\K_{LR}\right),
\end{equation}
where $\K_{MR}$, $\, \K_{CCR}$, $\, \K_{\CVA}$ and  $\K_{LR}$ are called respectively Market-risk, Counterparty credit risk, $\CVA$ and Leverage ratio capital. Each of these risks depends on the regulatory state of the firm and the counterparty's probability of default. Furthermore, the characteristics of the contract $\V$ are to be considered.
Except for $\K_{LR}$, these components are a percentage of the corresponding risk-weighted asset ($\RWA$). Namely,
\begin{equation*}
\begin{split}
&\K_{MR}=\eta \RWA_{MR},\\
&\K_{CCR}=\eta\RWA_{CCR},\\
&\K_{\CVA}=\eta \RWA_{\CVA},
\end{split}
\end{equation*}
where $\eta$ is the capital ratio. This latter represents the minimum percentage of TIER1 and TIER2 capital to be held by the institution. Its value is given by a supervisory authority but it is commonly set at $8\%$.
Market Risk Capital is a capital requirement held to offset the risk
of losses due to market risk. If the portfolio is completely hedged against this risk, it can be assumed to be zero. Therefore, in the following, we do not consider this capital component and set $\K_{MR}=0$. In particular, our regulatory capital does not take into account the FRTB capital contained in \cite[MAR 33]{marbasel}. We use the Standardised Approach for Counterparty Credit Risk (SACCR in \url{CRE52} on \url{bis.org}) to define a relevant measure called exposure-at-default measure (EAD). This latter is then used in the computation of the risk-weighted assets. Specifically, we also use the Standardized approach for counterparty credit risk capital to calculate $\RWA_{CCR}$, and we apply the Basic Approach to compute $ \RWA_{\CVA}$.
\subsection{EAD under Standardised method SACCR}
\label{subsect:EAD}
Under SACCR 
\begin{equation}
\EAD:=\alpha\times(\RC +\PFE).
\end{equation}
The replacement cost is given by $\RC:=(M-X)^+$,
and the potential future exposure is written as $\PFE:=m_t\times\AddOn_t$.
The multiplier $m_t$ is given by
\begin{equation}
    m_t:=\min\left\{ 1, 0.05+0.095\times\exp\left(\frac{M_t-X_t}{2\times 0.95\times \AddOn}\right)\right\},
\end{equation}
while $\AddOn=\text{SF} \times D$, where
SF denotes a constant supervisory factor (see \cite[CRE 52.72]{crebasel}), and $D$ is the effective notional $D:=d\times MF\times \delta$.
The adjusted notional $d$ is simply the value of the underlying, i.e. $d=S$. Assuming a day-count convention with $360$ business days, the maturity factor is given by
\begin{equation}
\label{eq:mat-fact}
 MF=\sqrt{\min((T-t)+ 10/360,1)}.
 \end{equation}
Finally, the supervisory delta for European Vanilla Options with strike $K$ is
\begin{equation}
\label{eq:def delta EAD}
 \delta=\begin{cases}
\phi\left(\frac{\log([S+0.01]/[K+0.01])+0.5 \sigma_r^2 (T-t)}{\sigma_r \sqrt{T-t}}\right) \quad \text{ Bought Call Option}, \\
-\phi\left(-\frac{\log([S+0.01]/[K+0.01])+0.5 \sigma^2 (T-t)}{\sigma_r \sqrt{T-t}}\right) \quad \text{ Bought Put Option},
\end{cases}
\end{equation}
where  $\phi$ denotes the cumulative distribution function of a standard normal random variable, and $\sigma_r$ is the supervisory volatility (see Table \cite[CRE 52.72]{crebasel}).
\subsection{Explicit capital formula}
We can now define the capital terms of \eqref{eq:gen def of capital}:
\begin{itemize}
\item[i)] The counterparty credit risk RWA is 
\begin{equation}
\RWA_{CCR}:=\omega\times 12.5 \times \EAD.
\end{equation}
The constant $\omega$ depends on the counterparty's rating and it can be found in \cite[CRE 20]{crebasel}.
\item[ii)] To the aim of $\RWA_{\CVA}$, we need to consider the maturity $M_t:=\min(1, (T-t))$, and the risk-weight RW of the counterparty (see Table 1 \cite[MAR 50.16]{marbasel}). Then
\begin{equation}\label{eq:CVA RWA}
\RWA_{\CVA}:= \frac{12.5\times 0.65}{\alpha}\times \text{RW}\times \text{M}_t \times \EAD\times \frac{1-e^{-0.05\times \text{M}_t}}{0.05\times \text{M}_t}.
\end{equation}
\item[ii)] Differently from other capital components, Leverage ratio capital is the Capital Measure 
\[\K_{LR}:=\text{Capital Measure}=\text{Exposure Measure}\times \text{LR}.\]
The Leverage Ratio (LR) is a percentage chosen by a regulator of at least $3\%$. In the concern of the Exposure Measure, in our case we have
\begin{equation}
\text{Exp. Measure}:=\max(M,0)+\AddOn,
\end{equation}
where  the AddOn is defined as in \ref{subsect:EAD}.
\end{itemize}
\section{Notation}
\label{sec:notation}
\begin{center}
    \begin{tabular}{|l|l|}
    \hline
    Parameter & Description\\
    \hline
        $\V$ & The value of the adjusted derivative \\
        $V^f$ & The value of the derivative considering CVA, DVA, and FVA\\
        $V$  & The risk-free value of the derivative\\
        $U$ & The value of the adjustment (XVA)\\
        $M$ & Mark-to-Market Value of the derivative (MTM)\\
        $S$ & Underlying stock \\
        $\mu$,  $\sigma$ & Stock growth and volatility\\
        $\gamma_S$ & \secondReviewer{Dividend} yield of the stock\\
        $q_S$& \secondReviewer{Repo} rate of $S$ \\
        $r$ & Risk-free rate\\
        $r^B$ & Funding rate on the \secondReviewer{issuer} bond\\
        $\lambda^B$ & Intensity of default of $\PB$\\
        $r^C$ & Yield on the counterparty bond\\
        $q^C$ & \secondReviewer{Repo} rate of $\PC$\\
        $\lambda^C$ & Intensity of default of $\PC$\\
        $\gB$ & Close-out in case of default of the \secondReviewer{issuer}\\
        $\gC$ & Close-out in case of default of the counterparty\\
         $X$ & Collateral account \\
        $r^X$ & Yield of the collateral account \\
        \hline
    \end{tabular} 
    \\
    \vspace{10 pt}
    \begin{tabular}{|l|l|}
    \hline 
    Parameter & Description\\
    \hline
        $\K$ & Capital requirement \\
        $\gamma^\K$ & Capital hurdle rate\\
        $\varphi\in [0,1]$ & Amount of $\K$ used to fund the position $\V$\\
        $\K_{MR}$& Market-risk capital\\
        $\K_{CCR}$& Counterparty-credit-risk capital\\
        $\K_{\CVA}$ & \CVA-capital\\
        $\K_{LR}$ & Leverage-ratio capital\\
        $\Eq$ & Equity financing the position $\V$\\
        $\RE$ & Retained earnings\\
        $\RWA$ & Risk-weighted-assets\\
        $\EAD$ & Exposure at default \\
        $\PFE$ & Potential Future Exposure\\
        $\RC$ & Replacement cost\\
        $\AddOn$ & Regulatory Add-On\\
        $\MF$ & Maturity factor \\
        $\SF$ & Supervisory factor\\
        \hline
    \end{tabular}
\label{tab:notat xva}
\end{center}

\section*{Acknowledgements}
This work has been partially funded by EU H2020-MSCA-ITN-2018 (ABC-EU-XVA Grant Agreement 813261). The authors also acknowledge the funding by the Spanish Mi\-nis\-try of Science and Innovation through the project PID2022-141058OB-I00 and from the Galician Government through grants ED431C 2022/47, both including FEDER financial support, as well as the support received from the Centro de Investigaci\'on en Tecnolog\'{\i}as de la Informaci\'on y las Comunicaciones de Galicia, CITIC, funded by Xunta de Galicia and the European Union (European Regional Development Fund, Galicia 2014-2020 Program) through the grant ED431G 2019/01.

Finally, the authors would like to thank the comments and work of two anonymous reviewers that helped to improve the article.

  \bibliographystyle{plain} 
  \bibliography{references}
\end{document}